	\newcommand{\Aut}{\ensuremath{\operatorname{Aut}}}
	\newcommand{\Ker}{\ensuremath{\operatorname{Ker}}}
	\newcommand{\thin}{ \mbox{H}^2(S, D)}
	\def\Aut{\mathop{\rm Aut}}
	\def\Out{\mathop{\rm Out}}
	\def\Inn{\mathop{\rm Inn}}
	\def\Sum{\mathop{\rm Sum}}
	\def\Cen{\mathop{\rm Cen}}
	\def\Stab{\mathop{\rm Stab}}
        \theoremstyle{plain}
	\newtheorem{thm}{Theorem}[section]
	\newtheorem{lemma}[thm]{Lemma}
	\newtheorem{theorem}[thm]{Theorem}
	\newtheorem{proposition}[thm]{Proposition}
	\newtheorem{corollary}[thm]{Corollary}
	\theoremstyle{definition}
	\newtheorem{definition}[thm]{Definition}
	\numberwithin{equation}{section}
	\numberwithin{figure}{section}
\begin{document}
	\title{Square-Free Rings with Local Units}
	\author{Martin Montgomery}
        \address{Department of Mathematics and Statistics \\
       Sam Houston State University}
        \email{mmontgomery@shsu.edu}
        \date{\today}
	
	\keywords{Square-Free Rings, Non-abelian cohomology, Rings with Local Units}
	\subjclass[2000]{Primary: 16P20}

	\begin{abstract}
Recently, the author characterized all artinian square-free rings with identity.  Here, those results are extended to the setting of rings with local units and a square-free ring (not necessarily with identity) is characterized by a square-free semigroup $S$, a division ring $D$, and a 2-cocycle $(\alpha, \xi)$ from the non-abelian cohomology of $S$ with coefficients in $D$.  We use this characterization of square-free rings to determine, in cohomological terms, exactly when a square-free ring has an associated square-free algebra structure.  Finally, using our characterization of a square-free ring $R$, it follows there is a short exact sequence
\begin{align*}
1 \longrightarrow H^1_{(\alpha, \xi)} (S, D) \longrightarrow  \mbox{Out }R
\longrightarrow W \longrightarrow 1
\end{align*}
where $W$ is the stabilizer of the action of $(\alpha, \xi)$ on $\Aut{S}$, and when $(\alpha, \xi)$ is trivial $W=\mbox{Aut}(S)$ and the sequence splits.  	
	\end{abstract}
        \maketitle

\section*{Introduction}
 
The study of outer automorphisms of certain algebraic structures, beginning with \cite{Stanley}, eventually led to the characterization of the outer automorphism group of finite dimensional square-free algebras by Anderson and D'Ambrosia \cite{And-D'Amb}.  A finite dimensional algebra $A$ over a field $K$ is \textit{square-free} if
$$\mbox{dim}_K (e_iAe_j) \leq 1$$
for every pair $e_i, e_j \in A$ of primitive idempotents.  In \cite{And-D'Amb2} Anderson and D'Ambrosia extended the characterization of the outer automorphism group to arbitrary square-free algebras.  Such an algebra $A$ is a \textit{ring with local units}, that is, although $A$ need not contain an identity element, for every pair $x, y \in A$ there exists an idempotent $u \in A$ such that $x, y \in uAu$.

Square-free rings were defined by D'Ambrosia \cite{D'Ambrosia} as a generalization of square-free algebras.  An artinian ring $R$ is \textit{square-free} if each indecomposable projective $R$-module has no repeated composition factors.  In \cite{D'Ambrosia}, the basic properties of square-free rings were developed and examples were provided of square-free rings that are not $D$-algebras, i.e., rings of the form $D \otimes_K A$, where $D$ is a division ring with center $K$ and $A$ is a square-free $K$-algebra.  Therefore, the characterization of a square-free ring does not follow automatically from a similar characterization for a square-free algebra.

The purpose of this note is to unify the results of \cite{And-D'Amb}, \cite{And-D'Amb2}, and \cite{Montgomery1} as a general theory for square-free rings.  As was shown in previous special cases, for a square-free ring $R$ (not necessarily with identity) there is a canonical square-free semigroup $S$, a division ring $D$, and a cocycle $(\alpha, \xi) \in Z^2(S, D)$ such that $R$ may be characterized  as the $D$-vector space with basis the nonzero elements of $S$ and with multiplication ``twisted" by the element $(\alpha, \xi)$.  From this characterization, we can determine the necessary and sufficient conditions under which a square-free ring $R$ with associated cocycle $(\alpha, \xi) \in Z^2(S, D)$ is a $D$-algebra. 

Finally, adopting the notion of inner automorphism from \cite{And-D'Amb2}, we use our characterization of square-free rings to generalize the main result of \cite{Montgomery1}. Thus, given a square-free ring $R=D_{\xi}^{\alpha}S$, we prove there is a short exact sequence 
\begin{align*}
1 \longrightarrow H^1_{(\alpha, \xi)} (S, D) \longrightarrow  \mbox{Out }R
\longrightarrow W \longrightarrow 1
\end{align*}
where $W$ is the stabilizer of the action of $(\alpha, \xi)$ on $\Aut(S)$, and when $(\alpha, \xi)$ is trivial $W=\mbox{Aut}(S)$ and the sequence splits.

\section{Semigroup Cohomology}\label{cohomology}

Let $S$ be a (not necessarily finite) semigroup with zero $\theta$ and set $E$ of nonzero pairwise orthogonal idempotents such that
$$S=\bigcup_{e_i, e_j \in E} e_i \cdot S \cdot e_j.$$
We say $S$ is \textit{square-free} if
$$|e_i\cdot S \cdot e_j\setminus \theta | \leq 1$$
for each $e_i, e_j \in S$, and we let $S^*=S \setminus \{ \theta \}$.  We write $s_{ii}=e_i \in E$ and $s_{ij} \in S^*$ whenever $s_{ij}=e_i \cdot s_{ij} \cdot e_j$.  Since $S$ is square-free this notation is unambiguous.

In \cite{And-D'Amb}, a cohomology with coefficient set the units of a field was used with square-free semigroups.  Let $S^{<0>}=E$ and for all $n \geq 1$ define $S^{<n>}$ by
$$S^{<n>}=\{ (s_1, s_2, \ldots, s_n) |~s_1 \cdot s_2 \cdots s_n \neq \theta \}.$$
For $K$ a field with nonzero elements $K^*$ we consider the (abelian) groups $F^n (S, K)$ consisting of all set maps from $S^{<n>}$ to $K^*$ under the operation of pointwise multiplication.

Define a boundary map $\partial^n : F^n (S, K) \longrightarrow F^{n+1} (S, K)$ by
$$ (\partial^0 \phi)(s_{ij})=\phi(e_j) \phi(e_i)^{-1}$$
and for $n \geq 1$
\begin{align*}
(\partial&^n  \phi)(s_1, s_2, \ldots s_{n+1})\\ 
&=
\phi(s_2, \ldots, s_{n+1}) \left( \prod_{i=1}^n \phi(s_1, \ldots, s_i \cdot s_{i+1}, \ldots, s_{n+1})^{(-1)^i} \right)
\phi(s_1, \ldots, s_n)^{(-1)^{n+1}}.
\end{align*}
As usual, $\partial^{n+1} \partial^n=1$ and we define
$$Z^n (S, K)=\mbox{Ker } \partial^n, ~~\mbox{and}~~B^n(S, K)=\mbox{Im } \partial^{n-1}$$
as the groups of $n$-dimensional cocycles and coboundaries, respectively.

Of particular interest are elements $\zeta \in Z^2 (S, K)$.  From the equation above, these satisfy
\begin{equation}\label{abelian 2-cocycle}
\zeta(s_{jk}, s_{k \ell} ) \zeta (s_{ij}, s_{jk} \cdot s_{k \ell})  =\zeta(s_{ij}, s_{jk} )  \zeta(s_{ij} \cdot s_{jk}, s_{k \ell})
\end{equation}
whenever $s_{ij} \cdot s_{jk} \cdot s_{k\ell} \neq \theta$.  
We now consider a non-abelian cohomology that generalizes Equation \ref{abelian 2-cocycle}.

Let $D$ be a division ring with set of units $D^*=D\setminus \{0 \}$ and let $\mbox{Aut}(D)$ be the group of ring automorphisms of $D$.  Define $S^{<n>}$ for each $n \geq 0$ as before and let $F^n(S, D)$ and $F^n(S, \Aut(D))$ (respectively) denote the set of all functions from $S^{<n>}$ to $D$ and $\Aut(D)$ (respectively).  Again, these sets $F^n(S, D)$ are groups with operation multiplication of functions, but need no longer be abelian.  For $\eta \in F^1(S, D)$ we denote by $\eta(s_{ij})$ the image of $s_{ij}$ under $\eta$ in $D^*$.  The sets $F^n(S, \Aut(D))$ are groups under composition of functions.  For $\mu \in F^0(S, \Aut(D))$, we denote by $\mu_i$ the automorphism of $D$ that is the image of $e_i \in E$ under $\mu$.
For $\alpha \in F^1(S, \Aut(D))$, we denote by $\alpha_{ij}$ the automorphism of $D$ that is the image of $s_{ij} \in S^*$ under $\alpha$.  For $s_{ii}=e_i$, we abbreviate $\alpha_{ii}=\alpha_i$.

There is a group action $\square$ of $F^0 (S, \Aut(D))$ on $F^1(S, D)$ defined as follows.  For $\mu \in F^0(S, \Aut(D))$, $\eta \in F^1(S, D^*)$, and $s_{ij} \in S^*$ we have
$$(\mu \square \eta)(s_{ij})=\mu_i( \eta(s_{ij})).$$
As in \cite{Montgomery1}, define an operation on the set
$$F^0(S, \Aut (D)) \times F^1 (S, D)$$
with multiplication given by 
$$(\hat{\mu}, \hat{\eta})(\mu, \eta)=( \mu \hat{\mu}, [\mu \square \hat{\eta}] \eta).$$
A straightforward check shows $F^0(S, \Aut (D)) \times F^1 (S, D)$ with the above operation is a group, which we denote by $G(S, D)$.

For $0 \neq d \in D$, denote by $\tau_d \in \Aut(D)$ the inner automorphism
$$\tau_d: x \longrightarrow dxd^{-1}.$$

Let $(\alpha, \xi) \in F^1(S, \Aut(D)) \times F^2(S, D)$.  If for all $(s_{ij}, s_{jk}, s_{k \ell} ) \in S^{<3>}$ the pair $(\alpha, \xi) $ satisfies the \textit{cocycle identities:}
	\begin{align}\label{2cochain}
	\alpha_{ij} (\xi(s_{jk}, s_{k \ell}) )~ \xi(s_{ij}, s_{jk} \cdot s_{k \ell})=\xi(s_{ij}, s_{jk}) \xi(s_{ij} \cdot s_{jk}, s_{k \ell})
	\end{align}
and
	\begin{align} \label{twistauto}
	\alpha_{ij} \circ \alpha_{jk}=\tau_{\xi(s_{ij}, s_{jk})} \circ \alpha _{ik}
	\end{align}
then $(\alpha, \xi)$ is called a \textit{2-cocycle}.  We denote the set of all such 2--cocycles by $Z^2(S, D)$.  Note that if $\alpha_{ij}=1$ is the identity automorphism for all $s_{ij} \in S^*$, then Equation \ref{2cochain} reduces to Equation \ref{abelian 2-cocycle} and Equation \ref{twistauto} implies that $\xi(s_{ij}, s_{jk} ) \in \Cen(D)$ for all $(s_{ij}, s_{jk}) \in S^{<2>}$.  It is easy to show that for $K=\Cen(D)$ with $\zeta \in Z^2(S, K)$, $(1, \zeta)$ satisfies Equations \ref{2cochain} and \ref{twistauto}.  Hence, we may identify $Z^2(S,K)$ as a subgroup of $Z^2(S,D)$.

By Theorem 1.3 of \cite{Montgomery1}, there is a group action $*$ of  $G(S, D)$ on $Z^2 (S, D)$ defined by 
$$(\mu, \eta) * (\alpha, \xi)=(\beta, \zeta)$$
where
\begin{equation}
\beta_{ij}=\mu_i^{-1} \circ \tau_{\eta(s_{ij})} \circ \alpha_{ij} \circ \mu_j.
\end{equation}
and 
\begin{equation}
\zeta(s_{ij},s_{jk})= \mu_i^{-1} [ \eta(s_{ij}) \alpha_{ij} (\eta(s_{jk})) \xi (s_{ij}, s_{jk}) \eta( s_{ij} \cdot s_{jk})^{-1}]
\end{equation}
for $(\mu, \eta) \in G(S, D)$, $(\alpha, \xi) \in Z^2(S,D)$ and $(s_{ij},s_{jk})\in S^{<2>}$.

\begin{definition}  We say that two 2-cocycles $(\alpha, \xi)$, $(\beta, \zeta) \in Z^2(S, D)$ are \textit{cohomologous} if there is a $(\mu, \eta) \in G(S,D)$ with
$$(\mu, \eta)*(\alpha, \xi)=(\beta, \zeta).$$
\end{definition}
Thus, $(\alpha, \xi)$, $(\beta, \zeta) \in Z^2(S, D)$ are cohomologous iff they lie in the same orbit under the $*$-action of $G(S,D)$, and in this case we abbreviate $[\alpha, \xi]=[\beta, \zeta]$.  As in \cite{Dedecker}, we call the orbits under $*$ the \textit{thin 2-cohomology} and denote them by
$$\thin.$$
Note then that two 2-cocycles $(\alpha, \xi)$, $(\beta, \zeta)$ are cohomologous iff there is some $(\mu, \eta) \in G(S,D)$ with

\begin{equation}\label{stuff1}
\mu_i \circ \beta_{ij} \circ \mu_j^{-1} =\tau_{\eta(s_{ij})} \circ \alpha_{ij} 
\end{equation}
and
\begin{equation}\label{stuff2}
\mu_i[\zeta(s_{ij}, s_{jk})]=\eta(s_{ij}) \alpha_{ij} (\eta(s_{jk})) \xi (s_{ij}, s_{jk}) \eta( s_{ij} \cdot s_{jk})^{-1}.
\end{equation}

 \begin{definition} We say a cocycle $(\alpha, \xi) \in Z^2 (S, D)$ is \textit{normal} if $\xi( e_i, e_i)=1$ for every $e_i \in E$.  
\end{definition}
In general, since $(\alpha, \xi) \in Z^2(S, D)$ satisfies Equation \ref{twistauto}, we have
 $$\alpha_{i} \circ \alpha_{i} =\tau_{\xi(e_i,e_i)} \circ \alpha_{i} \Rightarrow \alpha_{i}=\tau_{\xi(e_i,e_i)}$$
 for $e_i=e_i \cdot e_i \in E$.  So if $(\alpha, \xi)$ is normal, we must have $\alpha_i$ the identity automorphism on $D$ for all $e_i \in E$.  It was shown in \cite{Montgomery1} that every cohomology class $[\alpha, \xi] \in \thin$ has a normal representative.
 
In \cite{And-D'Amb2},  an algebra was built using a (possibly infinite) square-free semigroup $S$, a field $K$, and $\zeta \in Z^2(S, K)$.  A similar construction is possible using a division ring $D$ and $(\alpha, \xi) \in Z^2(S, D)$.

\begin{definition}
Given a division ring $D$, (possibly infinite) square-free semigroup $S$, and 2-cocycle $(\alpha, \xi) \in Z^2(S,D)$, let $R$ be the left $D$-vector space defined on $S^*=S\setminus \{\theta\}$ with multiplication defined on the basis $S^*$ by
$$s_{ij} s_{k\ell}=
\begin{cases}  \xi(s_{ij}, s_{k\ell}) s_{i\ell}  & \quad \mbox{if} \quad s_{ij} \cdot s_{k\ell} \neq \theta \vspace{.2cm}   \cr  0 & \quad \mbox{if} \quad s_{ij} \cdot s_{k\ell} = \theta  
\end{cases} $$
for all $s_{ij}, s_{jk} \in S$, and 
$$s_{ij}d=\alpha_{ij}(d)s_{ij}$$
for all $s_{ij} \in S$ and $d \in D^*$.  Then it follows from Equations \ref{2cochain} and \ref{twistauto} that this multiplication extended linearly is associative and produces a ring, that we denote by
$$D^{\alpha}_{\xi} S.$$
When $D=K$ is a field and $(1, \zeta) \in Z^2(S, K) \subseteq Z^2(S,D)$, we write
$$K_{\xi} S$$
for the $K$-algebras described in \cite{And-D'Amb2}.
\end{definition}

For an automorphism  (written as a right operator) $\phi \in \Aut (S)$ and $s \in S$, we write $s^{\phi}=(s)\phi$.  We can use the group $\mbox{Aut} (S)$ of semigroup automorphisms to define another action on $Z^2 (S, D)$.  Given $\phi \in \mbox{Aut} (S)$ and 2-cocycle $(\alpha, \xi) \in Z^2 (S, D)$, define a new 2-cocycle $(\alpha^{\phi}, \xi^{\phi}) \in Z^2(S, D)$ by
$$\alpha^{\phi} (s_{ij})=\alpha (s_{ij}^{\phi})$$
for all $s_{ij}\in S$ and 
$$ \xi^{\phi} (s_{ij}, s_{jk})=\xi( s_{ij}^{\phi}, s_{jk}^{\phi})$$
for all $(s_{ij}, s_{jk}) \in S^{<2>}$.  
It follows $[\alpha, \xi]=[\beta, \zeta]$ if and only if $[\alpha^{\phi}, \xi^{\phi}]=[\beta^{\phi}, \zeta^{\phi}]$ and $\mbox{Aut}(S)$ acts on $\thin$.

An easy extension of Lemma 1.3 of \cite{Montgomery1}  gives the following:

\begin{lemma} \label{rightiso}
Let $S$ be a square-free semigroup, and let $[\alpha, \xi], [\beta, \zeta] \in H^2(S, D)$.  If 
	$$[\beta, \zeta] = [\alpha^{\phi}, \xi^{\phi}]$$
for some $\phi \in \mbox{Aut }S$, then as rings, we have
	$$D^{\beta}_{\zeta} S \cong D^{\alpha}_{\xi}S.$$
It follows then, for each $(\alpha, \xi) \in Z^2(S, D^*)$ there exists a normal $(\beta, \zeta) \in Z^2(S, D)$ with $D^{\beta}_{\zeta} S \cong D^{\alpha}_{\xi}S$.  
\end{lemma}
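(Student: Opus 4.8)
The plan is to factor the claimed isomorphism through the two distinct ways in which $(\beta,\zeta)$ can differ from $(\alpha,\xi)$: a relabeling of the basis by the semigroup automorphism $\phi$, and a change of cocycle within a single class (the $*$-action of $G(S,D)$). Since $[\beta,\zeta]=[\alpha^{\phi},\xi^{\phi}]$, it suffices to prove (i) $D^{\alpha^{\phi}}_{\xi^{\phi}}S \cong D^{\alpha}_{\xi}S$ and (ii) $D^{\beta}_{\zeta}S \cong D^{\alpha^{\phi}}_{\xi^{\phi}}S$, and then compose the two isomorphisms.

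For (i), the relabeling step, I would define $\Phi : D^{\alpha^{\phi}}_{\xi^{\phi}}S \to D^{\alpha}_{\xi}S$ as the additive map that fixes scalars and sends each basis vector $s_{ij}$ to $s_{ij}^{\phi}$, i.e. $\Phi(d\,s_{ij}) = d\,s_{ij}^{\phi}$. Because $\phi$ permutes $E$ and $S^{*}$ bijectively and is a semigroup homomorphism, $\Phi$ is a bijection, and the verification of $\Phi\big((d_1 s_{ij})(d_2 s_{jk})\big)=\Phi(d_1 s_{ij})\Phi(d_2 s_{jk})$ reduces to the defining identities $\alpha^{\phi}(s_{ij})=\alpha(s_{ij}^{\phi})$ and $\xi^{\phi}(s_{ij},s_{jk})=\xi(s_{ij}^{\phi},s_{jk}^{\phi})$ together with $(s_{ij}\cdot s_{jk})^{\phi}=s_{ij}^{\phi}\cdot s_{jk}^{\phi}$; the case $s_{ij}\cdot s_{jk}=\theta$ is immediate, since both sides vanish. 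This step uses no cocycle information and is exactly the content of Lemma~1.3 of \cite{Montgomery1}.

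For (ii), the cohomologous step, let $(\mu,\eta)\in G(S,D)$ implement $[\beta,\zeta]=[\alpha^{\phi},\xi^{\phi}]$, so that relations \eqref{stuff1} and \eqref{stuff2} hold with $(\alpha,\xi)$ replaced throughout by $(\alpha^{\phi},\xi^{\phi})$. I would define $\Psi : D^{\beta}_{\zeta}S \to D^{\alpha^{\phi}}_{\xi^{\phi}}S$ by $\Psi(d\,s_{ij}) = \mu_i(d)\,\eta(s_{ij})\,s_{ij}$, that is, twist each left coefficient by $\mu_i$ and scale the basis vector $s_{ij}$ by the unit $\eta(s_{ij})$. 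Since each $\mu_i$ is an automorphism and each $\eta(s_{ij})$ is invertible, $\Psi$ is an additive bijection, so the one substantive point is multiplicativity. Expanding $\Psi\big((d_1 s_{ij})(d_2 s_{jk})\big)$ and $\Psi(d_1 s_{ij})\Psi(d_2 s_{jk})$ and cancelling the common factor $\mu_i(d_1)$, the needed equality becomes, after substituting \eqref{stuff1} for $\mu_i(\beta_{ij}(d_2))$ and \eqref{stuff2} for $\mu_i(\zeta(s_{ij},s_{jk}))$, an identity whose $\eta(s_{ij})^{\pm 1}$ and $\eta(s_{ik})^{\pm 1}$ factors telescope, leaving $\eta(s_{ij})\,\alpha^{\phi}_{ij}(\mu_j(d_2))\,\alpha^{\phi}_{ij}(\eta(s_{jk}))\,\xi^{\phi}(s_{ij},s_{jk})$ on both sides.

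I expect the placement of the twisting scalars to be the only real obstacle. Because $D$ is merely a division ring, order of multiplication matters everywhere, and the automorphism $\alpha^{\phi}_{ij}$ must land on $\eta(s_{jk})$ exactly as it is produced when the scalar $\mu_j(d_2)\eta(s_{jk})$ is moved leftward past $s_{ij}$ via $s_{ij}\,d=\alpha^{\phi}_{ij}(d)\,s_{ij}$; it is precisely this matching that forces twisting the \emph{left} coefficient by $\mu_i$ and \emph{scaling} the basis vector by $\eta(s_{ij})$, rather than using $\eta(s_{ij})^{-1}$ or $\mu_i^{-1}$. A secondary point, since $D^{\alpha}_{\xi}S$ need not have an identity, is to confirm that $\Phi$ and $\Psi$ are ring homomorphisms of the whole algebra and not merely maps of the spanning set; this is automatic because both are defined additively on the unique expansions $\sum d_{ij}s_{ij}$ and the computations above cover all products of basis vectors. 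Composing, $\Phi\circ\Psi : D^{\beta}_{\zeta}S \to D^{\alpha}_{\xi}S$ is the desired isomorphism. Finally, the concluding assertion follows at once: every class $[\alpha,\xi]$ contains a normal representative $(\beta,\zeta)$ by \cite{Montgomery1}, and applying the first part with $\phi=\mathrm{id}$ yields $D^{\beta}_{\zeta}S \cong D^{\alpha}_{\xi}S$ with $(\beta,\zeta)$ normal.
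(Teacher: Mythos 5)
Your proof is correct and is essentially the argument the paper intends: the paper disposes of this lemma by citing Lemma 1.3 of \cite{Montgomery1}, and your two-step factorization (relabeling the basis by $\phi$, then the cocycle-twist $\Psi(d\,s_{ij})=\mu_i(d)\,\eta(s_{ij})\,s_{ij}$ with the $\eta$-factors telescoping via Equations \ref{stuff1} and \ref{stuff2}) is precisely that ``easy extension,'' carried out in full. The concluding normality claim is also handled as the paper does, by invoking the existence of normal representatives from \cite{Montgomery1} and applying the first part with $\phi=1_S$.
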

 
 In light of Lemma \ref{rightiso}, we may assume that a ring $D^{\alpha}_{\xi} S$ is written using a normal representative $[\alpha, \xi] \in H^2 (S, D)$.  Then as elements of the ring $D^{\alpha}_{\xi} S$, $e^2=\xi(e, e) e \cdot e =e$ and it follows that $E$ is a complete set of primitive idempotents of $D^{\alpha}_{\xi} S$.

 \section{Characterization of Square-Free Rings With Local Units}
 
 
 In this section, we introduce the notion of square-free rings with local units.  We begin by showing such rings have many properties in common with square-free artinian rings, concluding with Theorem \ref{leftiso}, which shows a square-free ring with local units can be characterized as $D^{\alpha}_{\xi}S$, where $D$ is a division ring, $S$ is a square-free semigroup, and $[\alpha, \xi] \in H^2(S, D)$.  En route, we give a summary of Section 1 of \cite{And-D'Amb2}, which establishes the notion of inner automorphism for rings with local units.  Finally, we conclude with a Corollary \ref{algebra}, which gives necessary and sufficient conditions for a square-free ring to have a related square-free algebra structure.
 
 Let $R$ be a ring, not necessarily with an identity element, with Jacobson radical $J=J(R)$.  A set $U$ of idempotents in $R$ is a \textit{set of local units} for $R$ if for each pair $x, y \in R$ there exists $u \in U$ with $x, y \in uRu$.  Let $E$ be a set of primitive pairwise orthogonal idempotents in $R$ and denote by $\mathscr{U}_E$ the set of all finite sums of orthogonal sets from $E$.  So
$$\mathscr{U}_E=\{e_1+ \cdots e_n:~e_1, \ldots, e_n \in E~\mbox{ are orthogonal},~n=1,2,3, \ldots \}.$$

If the set $\mathscr{U}_E$ is a set of local units for $R$, then we say that $\mathscr{U}_E$ is an \textit{atomic} set of local units with set of \textit{atoms} $E$.  Not every set of local units is atomic and for a ring with an atomic set of local units there need not be a unique set of atoms.  For a ring $R$ with atomic set $E$, the set $\{uRu|~u \in \mathscr{U}_E\}$ is a directed set of unital subrings of $R$, directed by containment, with
$$R=\bigcup_{u \in \mathscr{U}_E } uRu.$$
If $R$ has atoms $E$, then for each $e_i \in E$ the left and right modules $Re_i$ and $e_iR$ are indecomposable, projective, with
$${}_{R}R=\bigoplus_{e_i \in E} Re_i \hspace{1cm} \mbox{and} \hspace{1cm} R_{R}=\bigoplus_{e_i \in E} e_iR.$$
We say $R$ is \textit{locally artinian} if the unital ring $uRu$ is artinian for each $u \in \mathscr{U}_E$.  


Suppose $M$ is a left finitely generated indecomposable $R$-module, where $R$ is a basic artinian ring with identity $1=e_1+e_2 + \ldots e_n$.  For each $e_i$, let $c_i(M)$ denote the number of composition factors isomorphic to $Re_i/Je_i$.  We say $M$ is \textit{square-free} if $c_i(M) \leq 1$ for each $e_i \in E$.  By lemma 1.2 of \cite{D'Ambrosia} an indecomposable left module $M$ is square-free if an only if for each $x \in M$ with $e_ix \neq 0$ we have
$$Re_ix=Re_iM.$$

\begin{definition}
Let $R$ be a locally artinian ring with set of atoms $E$.  Let $M$ be a finitely generated indecomposable left (right) $R$-module.  We say $M$ is left (right) square-free if for every $e_i \in E$, $Re_i x=Re_iM$ ($xe_iR=Me_iR$) whenever $x \in M$ with $e_i x \neq 0$ ($xe_i \neq 0$).
\end{definition}

As in the unital case, we'll say $M$ is \textit{square-free} if all indecomposable summands are square-free.  The ring $R$ is left (right) square-free if ${}_R R$ ($R_R$) is square-free.  We say $R$ is square-free if it is both left and right square-free.  It is easy to show that the algebras of \cite{And-D'Amb2} satisfy the definition above, so square-free algebras are square-free rings.  At the end of this section, we investigate the conditions under which a square-free ring has a related algebra structure.  

If $R$ is a locally artinian ring with atomic set $E$, for each $u \in \mathscr{U}_E$, $uRu$ is a unital ring and left $uRu$-modules are of the form $uM$, for some left $R$-module $M$.  If $M$ is square-free then $uM$ is square-free.  It follows then for a square-free ring $R$ the unital rings $uRu$ are square-free for all $u \in \mathscr{U}_E$.

We now have easy extensions of some results from \cite{D'Ambrosia}:

\begin{theorem}\label{square-free props}
Let $R$ be a locally artinian square-free ring with atomic set $E$ and $e_i, e_j \in E$.

\begin{itemize}
\item[(a)]  If $e_ire_j \neq 0$ for some $r \in R$, then $Re_ire_j=Re_iRe_j$

%

\item[(b)]  For each pair $e_i, e_j \in E$, $e_iRe_i$ is a division ring with $$\mbox{dim}_{e_iRe_i} (e_iRe_j) \leq 1.$$

\item[(c)]  For each $e_ixe_j \in e_iRe_j$, there is a ring isomorphism 
$$\alpha_{ij} : e_jRe_j \longrightarrow e_iRe_i$$
 given by
$$\alpha_{ij} (e_jre_j) \cdot e_ixe_j=e_ixe_j \cdot e_jre_j.$$

\item[(d)]  The set $S(R)=\{ e_iRe_j: ~e_i,e_j \in E\} \cup \{0 \}$ is a square-free semigroup with set of idempotents $E(R)=\{e_iRe_i:~e_i \in E\}$ and multiplication $(e_iRe_j) \cdot (e_kRe_{\ell})=e_iRe_je_kRe_{\ell}$.

\end{itemize}
\end{theorem}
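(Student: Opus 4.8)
**The plan is to prove Theorem \ref{square-free props} by reducing the four statements to the corresponding results for unital artinian square-free rings from \cite{D'Ambrosia}, exploiting the directed union $R = \bigcup_{u \in \mathscr{U}_E} uRu$ of unital square-free subrings.** The key observation is that for any finite collection of elements and idempotents appearing in a given statement, we may choose a single $u \in \mathscr{U}_E$ with $u = e_{i_1} + \cdots + e_{i_n}$ large enough that all relevant data lie inside the unital square-free artinian ring $uRu$; then I invoke the established unital results and check that the conclusions are insensitive to the choice of $u$.

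For part (a), given $e_i r e_j \neq 0$, I would pick $u = e_i + e_j$ (or any larger orthogonal sum containing both atoms), so that $e_i r e_j \in uRu$, and apply the corresponding unital lemma from \cite{D'Ambrosia} to deduce $Re_i r e_j = Re_i R e_j$ — here I must verify that multiplying on the left by all of $R$ rather than just $uRu$ produces the same submodule, which follows because $e_i = u e_i$ and left multiplication factors through $Re_i$. For part (b), the claim that $e_i R e_i$ is a division ring and $\dim_{e_iRe_i}(e_iRe_j) \le 1$ is purely local: everything happens inside $uRu$ for $u = e_i + e_j$, and $e_iRe_i = e_i(uRu)e_i$, so the unital square-free structure theorem applies directly. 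Part (d) requires assembling the local multiplication maps into a globally consistent semigroup structure on $S(R)$; the square-free condition from (b) guarantees each nonzero $e_iRe_j$ is one-dimensional over the division ring $e_iRe_i$, so the product $(e_iRe_j)\cdot(e_kRe_\ell) = e_iRe_je_kRe_\ell$ is well-defined as either $e_iRe_\ell$ or $0$, and associativity is inherited from the ring multiplication in $R$.

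For part (c), I would first note that the one-dimensionality from (b) makes the defining relation $\alpha_{ij}(e_jre_j)\cdot e_ixe_j = e_ixe_j \cdot e_jre_j$ unambiguous: since $e_ixe_j$ spans the one-dimensional space $e_iRe_j$ over $e_iRe_i$, for any $e_jre_j \in e_jRe_j$ the element $e_ixe_j \cdot e_jre_j$ lies in $e_iRe_j$ and hence equals $d \cdot e_ixe_j$ for a unique $d \in e_iRe_i$; setting $\alpha_{ij}(e_jre_j) = d$ defines the map. I would then verify $\alpha_{ij}$ is a ring homomorphism by a direct computation using associativity, and check bijectivity by symmetrically constructing $\alpha_{ji}$ as an inverse, again working inside the unital ring $(e_i+e_j)R(e_i+e_j)$.

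**The main obstacle I anticipate is the coherence in part (d)**: ensuring that the semigroup multiplication on $S(R)$ is well-defined and associative globally, rather than merely within each $uRu$. The subtlety is that the product of two one-dimensional spaces $e_iRe_j$ and $e_kRe_\ell$ is nonzero only when $j = k$ and the composite is nonzero, and I must confirm that this product genuinely lands in a single atom-indexed space $e_iRe_\ell$ (using square-freeness to rule out the product spilling across distinct composition factors) and that the orthogonality of $E$ forces the ``middle index matching'' rule. Once the square-free dimension bound is in hand, associativity is a routine consequence of associativity in $R$, but the bookkeeping to confirm the operation descends to the set $S(R)$ and respects the zero element $\theta = 0$ is where the argument needs the most care.
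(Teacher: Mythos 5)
Your proposal is correct and takes essentially the same route as the paper: both reduce to the unital artinian theory of \cite{D'Ambrosia} by observing that each statement involves only finitely many atoms and hence lives inside a unital square-free artinian corner $uRu$, $u \in \mathscr{U}_E$ (the paper localizes once, to prove that $Re_i/Je_i \cong Re_j/Je_j$ forces $e_iJe_j=0$, and then adapts the unital arguments for (b)--(d), while you cite the unital conclusions for the corners and lift them back, which is legitimate since $e_iRe_j=e_i(uRu)e_j$). Two minor remarks: part (a) requires no localization at all, because the paper's definition of ${}_RR$ being left square-free, applied to the indecomposable summand $Re_j$ and $x=re_j$, is verbatim the assertion $Re_ire_j=Re_iRe_j$; and in (c) the inverse of $\alpha_{ij}$ must be built from the right-handed relation $d\,(e_ixe_j)=(e_ixe_j)\,\beta(d)$ (i.e., from right square-freeness), not from an element of $e_jRe_i$ --- which may well be zero --- so calling it $\alpha_{ji}$ is a slip, though your appeal to the unital corner ring covers the point.
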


\begin{proof}
The result (a) follows easily from the definition of square-free.  Suppose $Re_i/Je_i \cong Re_j/Je_j$.  Choose $u \in \mathscr{U}_E$ such that $e_i, e_j \in uRu$.  Then
$uRe_i/uJe_i \cong uRe_j/uJe_j$ are simple $uRu$-modules.  Since $R$ is square-free, the unital ring $uRu$ is square-free with square-free summand $uRe_j$.  Then $uRe_j/uJe_j$ is the only composition factor of the square-free module $uRe_j$ that is isomorphic to $uRe_i/uJe_i$, hence $e_i Je_j=0$.  Now (b), (c), and (d) follow by arguments similar to the unital cases (see Theorem 1.4,  Corollary 1.5, and Proposition 1.6 of \cite{D'Ambrosia}).  
\end{proof}

For a square-free ring $R$ the division ring $D$ of Theorem \ref{square-free props} parts (b) and (c) is called the \textit{division ring of $R$}.  The semigroup from Theorem \ref{square-free props} part (d) is called the \textit{regular semigroup of R}.  We call an injective map $\chi: S(R) \rightarrow R$ with $\chi(0)=0$, and with $\chi(e_iRe_j)=s_{ij} \in e_iRe_j$ where 
$$s_{ij} = \begin{cases} e_i  & \quad \mbox{if} \quad i=j \vspace{.2cm} 
\cr  s_{ij} \in e_iRe_j \setminus \{0 \}  & \quad  \mbox{if} \quad e_iRe_j \neq 0 \vspace{.2cm} 
\cr  0 & \quad  \mbox{if} \quad e_iRe_j =0 
\end{cases} $$
a \textit{choice map for S(R)}.  Given such a choice, $\chi$, we denote the image $\mbox{Im } \chi$,
$$S_{\chi}=\{s_{ij}=\chi(e_iRe_j)|~e_iRe_j \in S(R) \} \cup \{0 \},$$
and define a multiplication on $S_{\chi}$ by

$$s_{ij} \cdot s_{k \ell}= \begin{cases} s_{i \ell}  & \quad \mbox{if} \quad e_iRe_je_kRe_{\ell} \neq 0 \vspace{.2cm}   \cr  0 & \quad  \mbox{otherwise.} 
\end{cases} $$
Since $R$ is square-free, this multiplication is well-defined and $S_{\chi}$ is a semigroup with zero.  Since $s_{ii}=e_i$ is idempotent the set  $E$ is the set of nonzero idempotents of $S_{\chi}$ and $S_{\chi}$ is a square-free semigroup.  Since each choice map $\chi: S(R) \rightarrow R$ is injective, it follows for every choice map $\chi$, $S_{\chi} \cong S(R)$.  


For a square-free semigroup $S$ with idempotent set $E$, we can define a relation $\sim$ on $E$ by $e_i \sim e_j$ if and only if $e_i \in e_i S e_j Se_i$.  By Lemma 1.1 of \cite{And-D'Amb}, this is an equivalence relation.  If $\overline{E}$ is a complete set of representatives of the $\sim$-equivalence classes, then
$$\overline{S}=\bigcup_{\overline{e}_i, \overline{e}_j \in \overline{E} } \overline{e}_i S \overline{e}_j$$
is a square-free semigroup with idempotent set $\overline{E}$.  The semigroup $\overline{S}$ is called the \textit{reduced semigroup} of $S$ and is independent (up to isomorphism) of the choice of representatives of $\overline{E}$.

Let $e_i \in E$.  The set
$$S[e_i]=\bigcup_{e_j \sim s_k \sim s_i} e_j S e_k$$
is a square-free subsemigroup of $S$ on which $\sim$ is the universal relation.  We will call the subsemigroup $S[e_i]$ the $\sim$-block of $S$.  If the $\sim$-equivalence relation of $e_i$ has finite cardinality $n$, then it follows the block $S[e_i]$ is isomorphic to the semigroup
$$ \{ e_{ij} \in \mathbb{M}_n (\mathbb{Z})~|~1 \leq i, j \leq n \} \cup \{ 0 \}$$
of matrix units in the ring $\mathbb{M}_n (\mathbb{Z})$ of $n \times n$ matrices of integers together with the zero matrix.  We now have a result that shows that 2-cocyles restricted to the $\sim$-blocks are essentially trivial.

\begin{lemma}\label{reduced}
If $S$ is a square-free semigroup and $(\alpha, \xi) \in Z^2 (S, D)$, then there is $(\mu, \eta) \in G(S, D)$
such that for every $\sim$-block $S[e_i]$ of $S$, the restriction of $(\mu, \eta) *(\alpha, \xi)$ to $S[e_i]$ is $1$.
\end{lemma}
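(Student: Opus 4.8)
The plan is to exploit the fact that, inside a single $\sim$-block, the square-free semigroup behaves exactly like a full set of matrix units: for $e_i \sim e_j \sim e_k$ the products $s_{ij}s_{jk}$ are nonzero and equal $s_{ik}$, so the block is a ``connected'' structure in which every $s_{ij}$ is invertible with inverse $s_{ji}$. On such a connected piece a $2$-cocycle can always be gauged away by spreading out from a single base point, and I will build the required $(\mu, \eta)$ explicitly by this path construction, treating each block independently and assigning arbitrary (say trivial) values off the blocks, which never enter a block restriction.

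First I would record the multiplicative rigidity of a block, so as not to lean on the finite matrix-unit model. Fix a block and note $s_{ij}\neq\theta$ whenever $e_i\sim e_j$; since $s_{ij}s_{jk}\in e_i S e_k$ while $(s_{ij}s_{jk})s_{kj}=s_{ij}(s_{jk}s_{kj})=s_{ij}e_j=s_{ij}\neq\theta$, square-freeness forces $s_{ij}s_{jk}=s_{ik}$. This holds for arbitrary (possibly infinite) blocks and is precisely what makes the base-point construction close up. Next, choosing (by the axiom of choice) a base idempotent $e_0$ in each $\sim$-class, I would define $(\mu,\eta)\in G(S,D)$ on that class by
$$\mu_i=\alpha_{i0},\qquad \eta(s_{ij})=\xi(s_{ij},s_{j0})^{-1}$$
for $e_i\sim e_j$, and $\eta=1$ on each $s_{ij}$ with $e_i\not\sim e_j$. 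The goal is to show $(\mu,\eta)*(\alpha,\xi)$ restricts on every block to the trivial cocycle, i.e.\ to verify the cohomology conditions \eqref{stuff1} and \eqref{stuff2} with $\beta_{ij}=\id$ and $\zeta=1$; these read $\mu_i\circ\mu_j^{-1}=\tau_{\eta(s_{ij})}\circ\alpha_{ij}$ and $\eta(s_{ij})\,\alpha_{ij}(\eta(s_{jk}))\,\xi(s_{ij},s_{jk})\,\eta(s_{ik})^{-1}=1$, using $s_{ij}s_{jk}=s_{ik}$ and $\mu_i[1]=1$.

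For the automorphism identity I would feed in \eqref{twistauto} as $\alpha_{ij}\circ\alpha_{j0}=\tau_{\xi(s_{ij},s_{j0})}\circ\alpha_{i0}$, which rearranges to $\alpha_{i0}\circ\alpha_{j0}^{-1}=\tau_{\xi(s_{ij},s_{j0})^{-1}}\circ\alpha_{ij}$; since $\mu_i\mu_j^{-1}=\alpha_{i0}\alpha_{j0}^{-1}$ and $\eta(s_{ij})=\xi(s_{ij},s_{j0})^{-1}$, this is exactly the first required equation, with no appeal to normality. For the scalar identity, substituting the definitions collapses the claim to $\alpha_{ij}(\xi(s_{jk},s_{k0}))\,\xi(s_{ij},s_{j0})=\xi(s_{ij},s_{jk})\,\xi(s_{ik},s_{k0})$, which is precisely \eqref{2cochain} at the triple $(s_{ij},s_{jk},s_{k0})$ once one substitutes $s_{jk}s_{k0}=s_{j0}$ and $s_{ij}s_{jk}=s_{ik}$. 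Thus both verifications reduce mechanically to the two defining cocycle identities.

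The main obstacle is really conceptual rather than computational: guessing the correct gauge $(\mu,\eta)$ built from $\alpha$ and $\xi$ along paths to the base point. Once that is in hand the two checks are forced, each being one of the cocycle identities read off at the base point. The only genuine subtlety I would guard against is the possibly infinite block, which is why I would prove the ``$s_{ij}s_{jk}=s_{ik}$'' rigidity statement directly instead of quoting the finite matrix-unit description, ensuring every product $s_{ij}\cdot s_{j0}$, $s_{jk}\cdot s_{k0}$ appearing in the formulas is nonzero and that the triple $(s_{ij},s_{jk},s_{k0})$ indeed lies in $S^{<3>}$.
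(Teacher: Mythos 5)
Your proof is correct and takes essentially the same route as the paper's: fix a base idempotent in each $\sim$-block, build $(\mu,\eta)$ from $\alpha$ and $\xi$ along the unique connections between each idempotent of the block and the base, and verify triviality of $(\mu,\eta)*(\alpha,\xi)$ on the block using Equations \ref{2cochain} and \ref{twistauto}. The differences are cosmetic: you gauge with the connections $s_{j0}$ \emph{into} the base (so $\mu_j=\alpha_{j0}$, $\eta(s_{jk})=\xi(s_{jk},s_{k0})^{-1}$) where the paper uses the connections out of it (so $\mu_j=\alpha_{ij}^{-1}$, $\eta(s_{jk})=\alpha_{ij}^{-1}[\xi(s_{ij},s_{jk})^{-1}]$), and you write out both the block rigidity $s_{ij}\cdot s_{jk}=s_{ik}$ and the final verification that the paper leaves as a ``tedious but straightforward calculation.''
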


\begin{proof}
Let $e_i \in E$ and consider $s_{jk} \in S[e_i]$.  
By definition of the the $\sim$-block $S[e_i]$, there exists a unique $s_{ij}=e_i \cdot s_{ij} \cdot e_j \in e_iS[e_i]$ such that
$s_{ij} \cdot s_{jk} \neq \theta.$
For every $e_j \in S[e_i]$, define $\mu:E \rightarrow \Aut(D)$ by $\mu_j=\alpha_{ij}^{-1}$.

If $s_{jk} \in S[e_i]$ is the only element of $S[e_i]$, set $\eta(s_{ij})=1$.  Otherwise, for $s_{jk} \in S[e_i]$ there exists $s_{ij} \in e_i S[e_i]$ such that $s_{ik}=s_{ij} \cdot s_{jk}\neq \theta$.  Define $\eta: S\rightarrow D$ by $\eta(s_{jk})=\alpha_{ij}^{-1}[ \xi(s_{ij}, s_{jk} )^{-1}]$.  Then $(\mu, \eta) \in G(S, D)$.

Now a tedious but straightforward calculation using Equations \ref{2cochain} and \ref{twistauto} shows that $(\beta, \zeta)=(\mu, \eta) *(\alpha, \xi)$ restricts to 1 on each $\sim$-block $S[e_i]$.

\end{proof}

For a square-free semigroup $S$ with reduced semigroup $\overline{S}$, Lemma \ref{reduced} shows that $H^2(S, D)=H^2( \overline{S}, D)$.  Therefore, in addition to choosing $[\alpha, \xi] \in H^2 (S, D)$ to be normal, we may also assume that $[\alpha, \xi]$ is trivial on all $\sim$-blocks of $S$.


We now have the following extension of Theorem 2.1 of \cite{Montgomery1} and Theorem 2.2 of \cite{And-D'Amb2}.

\begin{theorem}\label{classification-rlu}
Let $R$ be a locally artinian ring with atomic set $E$.  Then $R$ is a square-free ring if and only if
there is a square-free semigroup $S$, division ring $D$, and a 2-cocycle $(\alpha, \xi) \in Z^2(S, D)$, trivial on all $\sim$-blocks of $S$, such that
$R \cong D^{\alpha}_{\xi} S$.
\end{theorem}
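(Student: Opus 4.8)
The plan is to prove the two implications separately, disposing of sufficiency quickly and devoting the bulk of the argument to necessity, where the cocycle $(\alpha,\xi)$ must be manufactured from the internal structure of $R$.

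For sufficiency, suppose $R \cong D^{\alpha}_{\xi}S$, where by Lemma \ref{rightiso} we may take $(\alpha,\xi)$ normal. First I would verify that $E=\{e_i\}$ is an atomic set of local units: normality makes each $e_i$ idempotent, any finite orthogonal sum $u\in\mathscr{U}_E$ acts as an identity on the finite-dimensional subspace it supports, and each $uRu$ is a finite-dimensional $D$-algebra, hence artinian, so $R$ is locally artinian. Square-freeness then follows exactly as for the algebras of \cite{And-D'Amb2}: the corner $e_iRe_j$ is the $D$-span of the single basis vector $s_{ij}$ (or is $0$), so $\dim_D(e_iRe_j)\le 1$, and the module criterion of \cite{D'Ambrosia} (namely $Re_ix=Re_iM$ whenever $e_ix\ne 0$) is immediate from this one-dimensionality together with the decomposition ${}_RR=\bigoplus_{e_i}Re_i$.

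For necessity the starting data are furnished by Theorem \ref{square-free props}: part (d) gives the square-free regular semigroup $S=S(R)$, and parts (b),(c) give the division ring $D$ of $R$ together with the ring isomorphisms relating the corners $e_iRe_i$. Fixing a choice map $\chi$ produces concrete representatives $s_{ij}\in e_iRe_j$ with $S_\chi\cong S$, and fixing ring isomorphisms $\theta_i:e_iRe_i\to D$ onto the single division ring $D$ lets me transport the map of part (c) to an automorphism $\alpha_{ij}=\theta_i\circ(\cdot)\circ\theta_j^{-1}\in\Aut(D)$ and define $\xi(s_{ij},s_{jk})\in D$ as the image under $\theta_i$ of the structure constant determined by $s_{ij}s_{jk}=\xi(s_{ij},s_{jk})\,s_{ik}$ in $R$. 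The candidate isomorphism $\Phi:D^{\alpha}_{\xi}S\to R$ sends the formal basis vector $s_{ij}$ to its representative in $R$ and a scalar $d$ on the $s_{ij}$-coordinate to $\theta_i^{-1}(d)\,s_{ij}$; that $\Phi$ is an additive bijection is forced by $R=\bigoplus_{i,j}e_iRe_j$ with each summand of dimension $\le 1$ over $D$.

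The crux is showing simultaneously that $(\alpha,\xi)\in Z^2(S,D)$ and that $\Phi$ is multiplicative, both of which reduce to associativity in $R$. Expanding the triple product $s_{ij}(s_{jk}s_{k\ell})=(s_{ij}s_{jk})s_{k\ell}$ in terms of the structure constants yields the cocycle identity \ref{2cochain}, with $\alpha_{ij}$ entering precisely upon commuting $s_{ij}$ past the scalar $\xi(s_{jk},s_{k\ell})$ via $s_{ab}d=\alpha_{ab}(d)s_{ab}$; comparing instead $s_{ij}(s_{jk}d)$ with $(s_{ij}s_{jk})d$ yields the twist identity \ref{twistauto}, where the inner automorphism $\tau_{\xi(s_{ij},s_{jk})}$ appears because $s_{ij}s_{jk}$ equals $\xi(s_{ij},s_{jk})\,s_{ik}$ rather than $s_{ik}$ itself. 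Both identities are demanded only on $S^{<3>}$, i.e.\ for triples of nonzero product, which is exactly the range over which these computations carry content, so the domains match automatically. I expect this bookkeeping to be the main obstacle: every computation must be kept consistent across the identifications $\theta_i$, one must confirm that part (c) genuinely delivers a ring automorphism of $D$ (up to the inner ambiguity absorbed by the choice map), and the defining relations of $D^{\alpha}_{\xi}S$ must be checked to correspond term-by-term with those of $R$ so that $\Phi$ is a ring isomorphism.

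Finally, having produced some $(\alpha,\xi)\in Z^2(S,D)$ with $R\cong D^{\alpha}_{\xi}S$, I would invoke Lemma \ref{reduced} to replace $(\alpha,\xi)$ by a cohomologous cocycle that restricts to $1$ on every $\sim$-block of $S$; by Lemma \ref{rightiso} this leaves the isomorphism class of $D^{\alpha}_{\xi}S$ unchanged, yielding the normalized form asserted in the statement.
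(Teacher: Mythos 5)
Your proposal is correct and takes essentially the same route as the paper's own proof: both implications rest on Theorem \ref{square-free props}, the cocycle $(\alpha,\xi)$ is extracted from the structure constants via a choice map and the two identities are forced by associativity of $R$ exactly as in the paper, and the final normalization to a cocycle trivial on the $\sim$-blocks is obtained from Lemmas \ref{rightiso} and \ref{reduced}. The differences (explicit corner isomorphisms $\theta_i$, the order in which the two directions are treated) are purely presentational.
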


\begin{proof}
Let $R$ be a square-free ring with atomic set $E$ and let $S=S_{\chi}$ for some choice map $\chi: e_iRe_j \rightarrow s_{ij}$, so that $S$ is a square-free semigroup with nonzero idempotent set $E$.  By Theorem \ref{square-free props} (b) and (c), for each $e_i \in E$ there is an isomorphism $\phi_i: D \rightarrow e_iRe_i$ where $D$ is the division ring of $R$.  We will abbreviate 
$$\phi_{i}(d)=d_i$$
for each $e_i \in E$ and $d \in D$.  It follows from Theorem \ref{square-free props} (a) there is a map $\xi \in F^2(S, D)$ such that for all $e_i, e_j, e_k, e_{\ell} \in E$
$$s_{ij} s_{k \ell} \neq 0 \Rightarrow s_{ij} s_{k\ell}= \xi(s_{ij}, s_{jk})_i  s_{i\ell}.$$
Note since $s_{ii}=e_i$ for each $e_i \in E$, we have for all $e_i, e_j \in E$
$$s_{ij} \neq 0 \Rightarrow \xi(e_i, s_{ij})=\xi(s_{ij}, e_j)=1.$$

Next, by Theorem \ref{square-free props} (c),  there exists $\alpha \in F^1(S, \mbox{Aut}(D))$ so that for each $d \in D$ and $s_{ij} \in S^*$ we have
$$(\alpha_{ij}(d))_i s_{ij}=s_{ij} d_j.$$
Since $s_{jj}d_j=e_jd_j=d_{j} e_j=d_j s_{jj}$, we have $\alpha_{j}=1$ for all $j $.

Because multiplication in $R$ is associative,  for $s_{ij} s_{jk} s_{k \ell} \neq 0$ we have
\begin{align*}
& ( s_{ij} s_{jk}) s_{k\ell}= s_{ij}( s_{jk} s_{k\ell})  \Rightarrow  \xi(s_{ij}, s_{jk})_i \xi(s_{ik}, s_{k \ell} )_i  =(\alpha_{ij} (\xi (s_{jk}, s_{k\ell}))_i \xi(s_{ij}, s_{j\ell} )_i
\end{align*}
by equating coefficients of $s_{i\ell}$.

Also, for $d \in D$ and $s_{ij} s_{jk} \neq 0$ we have
\begin{align*}
 ( s_{ij} s_{jk})d_k&=s_{ij} (s_{jk}d_k) 
\Rightarrow  \xi(s_{ij} , s_{jk} )_i s_{ik} d_k=s_{ij} (\alpha_{jk}(d))_j s_{jk} 
\\ 
\Rightarrow & \xi(s_{ij} , s_{jk} )_i  (\alpha_{{ik}} (d))_i s_{ik}=(\alpha_{ij}(\alpha_{jk}(d)))_i \xi(s_{ij}, s_{jk} )_i s_{ik}
\\ 
\Rightarrow & \alpha_{ij} \circ \alpha_{jk}=\tau_{\xi(s_{ij}, s_{jk} )} \circ \alpha_{ik}
\end{align*}
Thus we see that $(\alpha, \xi)$ is a normal 2-cocycle.  The map $\gamma: D_{\xi}^{\alpha} S \rightarrow R$ via $ds_{ij} \rightarrow d_i s_{ij}$ is a clearly a monic ring homomorphism.  But since 
$$R=\bigcup_{u  \in \mathscr{U}_E} uRu \Rightarrow R=\bigcup_{e_i, e_j \in E} e_i Re_j,$$ $\gamma$ is surjective, and $R \cong D^{\alpha}_{\xi} S$.

Conversely, consider $R=D^{\alpha}_{\xi}S $ for some square-free semigroup $S$ with idempotent set $E$, division ring $D$, and $(\alpha, \xi) \in Z^2(S, D)$.  By Lemma \ref{rightiso} and Lemma \ref{reduced}, we may assume $(\alpha, \xi)$ is a normal 2-cocycle whose restriction on each $\sim$-block of $S$ is trivial.  

Note 
$$e_i^2=e_ie_i=\xi(e_i,e_i) e_i\cdot e_i=e_i$$
since $\xi(e_i, e_i)=1$ for all $e_i \in E$.  It follows that $e_iRe_j=D^{\alpha}_{\xi} e_iSe_j$ for all $e_i, e_j \in E$.

Furthermore $e_iRe_i =De_i \Rightarrow e_iRe_ie_i=D e_i$ and for every $e_i$, we may identify the elements of $e_i Re_i$ with elements of $D$.  Hence each $e_iRe_i$ is a division ring.

Since $S$ is square-free, for all $e_i, e_j \in E$, $e_i S^* e_j$ has at most cardinality one, so 
$$\mbox{dim}_{e_iRe_i} (e_i Re_j) \leq 1.$$  
Let $0 \neq e_i x e_j \in e_iRe_j$.  Now $Re_i=Re_iRe_i$ by Theorem \ref{square-free props} (a) and since $\mbox{dim}_{e_iRe_i} (e_i Re_j) = 1$,
$$Re_ixe_j=R \cdot e_i Re_i \cdot e_i x e_j=R \cdot e_iRe_j=Re_iRe_j.$$
So for any $e_j \in E$, the indecomposable projective module $Re_j$ is square-free.  Hence $R$ is left square-free.  A similar result holds for each $e_i R$.  Therefore, $R$ is square-free.
\end{proof}





In order to establish the main theorem of this note, we must consider automorphisms for square-free rings (which will be written as right operators.)  Let $X \subseteq R$ and let $a \in R$.  As in \cite{And-D'Amb2} we set
$$X_{\ell}(a)=\{ x \in X:~xa \neq 0 \},~~~~X_{r}(a)=\{ x \in X: ax \neq 0\},$$
and
$$X(a)=X_{\ell}(a) \cup X_r (a).$$

We say $X$ is \textit{left (right) summable} in case $X_{\ell}(e_i)$ ($X_r (e_i)$) is finite for every $e_i \in E$.  We say $X$ is \textit{summable} in case it is both left and right summable.

Let $X \subseteq R$ be left summable.  Since $X_{\ell} (a)$ is finite, for each $a \in R$ there exists an $R$-homomorphism
$$\lambda_X:R_{R} \rightarrow R_{R}~~\mbox{via}~~(a) \lambda_X = \left( \sum X \right)a=\sum_{x \in X} xa.$$
If $X$ is right summable, there exists an $R$-homomorphism
$$\rho_X:{}_{R} R \rightarrow {}_{R} R~~\mbox{via}~~(a) \rho_X =a \left( \sum X \right)=\sum_{x \in X} ax.$$

Denote the set of all summable sets of $R$ by $\Sum R$.  If $X, Y \in \Sum R$, define the product of $X$ and $Y$ by
$$XY=\{xy:~x \in X, y \in Y\}.$$
By Lemma 1.4 of \cite{And-D'Amb2}, if $X, Y \in \Sum R$, then $XY \in \Sum R$ and it follows that $\Sum R$ is a semigroup.  Define a relation $\equiv$ on $\Sum R$ by
$$X \equiv Y \Leftrightarrow \lambda_X = \lambda_Y.$$  By Lemma 1.5 of \cite{And-D'Amb2}, the relation $\equiv$ is a multiplicative congruence on $\Sum R$ and $\Sum R/\equiv$ is a monoid with multiplicative identity $E$.

Denote the group of units of the monoid $\Sum R/\equiv$ by $U(R)$.  An element $X \in \Sum R$ will be said to be a \textit{unit} or to be \textit{invertible} if its class $X^{\equiv}$ is in $U(R)$.  Thus, $X \in \Sum R$ is invertible iff there is some $Y \in \Sum R$ with
$$XY \equiv YX \equiv E.$$
If such a $Y$ exists, we write $Y=X^{-1}$ and call $Y$ the \textit{inverse} of $X$.  For $X$ invertible with inverse $Y=X^{-1}$ define the map $\tau_X: R \longrightarrow R$ by
$$\tau_X:a \rightarrow (a) \lambda_Y \rho_X.$$
This notation is purposefully similar to the previously defined inner automorphism $\tau_d \in \Inn{D}$.  By Lemma 1.6 of \cite{And-D'Amb2} the map $\tau_X$ is an automorphism of $R$.

We say an automorphism $\gamma \in \Aut R$ is \textit{inner} in case there exist maps
$$u, v: \mathscr{U}_E \longrightarrow R$$
such that for all $e, f \in \mathscr{U}_E$,
$$ u(e) \in(e)\gamma Re,~~~~\mbox{and}~~~~ v(f) \in f R(f)\gamma$$
and for all $x \in eRf$,
$$(x)\gamma=u(e)xv(f).$$
For such an inner automorphism we have
$$u(e) ve(e)=(e)\gamma~~~\mbox{and}~~~v(e)u(e)=e$$
for all $e \in \mathscr{U}_E$.  We can now state an important and useful result (See Proposition 1.7 of \cite{And-D'Amb2}).

\begin{proposition}
An automorphism $\gamma \in \Aut R$ is inner if and only if $\gamma=\tau_X$ for some $X \in U(R)$.
\end{proposition}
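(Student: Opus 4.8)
The plan is to prove the two implications by moving between the \emph{local} data $(u,v)$ defining an inner automorphism and the \emph{global} summable-set data defining $\tau_X$; the bridge in both directions is the explicit formula one reads off the definition, namely $(a)\tau_X=\left(\sum X^{-1}\right)a\left(\sum X\right)$, obtained from $(a)\lambda_{X^{-1}}\rho_X$. The only inputs I expect to need are that $E$ acts as a two-sided identity, $\left(\sum_{e\in E}e\right)a=a$ for all $a\in R$, and the two multiplier identities packaged in $XX^{-1}\equiv X^{-1}X\equiv E$, i.e. $\left(\sum X\right)\left(\sum X^{-1}\right)a=a$ and $\left(\sum X^{-1}\right)\left(\sum X\right)a=a$ for all $a$.

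For the implication $\gamma=\tau_X\Rightarrow$ inner, I would fix $X\in U(R)$, write $Y=X^{-1}$, and set $u(e)=\left(\sum Y\right)e$ and $v(f)=f\left(\sum X\right)$ for $e,f\in\mathscr{U}_E$; these are genuine elements of $R$ because $X,Y$ are summable, so each sum is finite. For $x\in eRf$ we have $x=exf$, hence $(x)\tau_X=\left(\sum Y\right)exf\left(\sum X\right)=u(e)\,x\,v(f)$, which is exactly the defining relation of an inner automorphism. The four remaining requirements, $u(e)\in(e)\tau_X Re$, $v(f)\in fR(f)\tau_X$, $u(e)v(e)=(e)\tau_X$ and $v(e)u(e)=e$, each reduce to a one-line substitution of $\left(\sum X\right)\left(\sum Y\right)e=e$ (and its mirror), so this direction is essentially bookkeeping.

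For the converse I would start from an inner $\gamma$ with witnessing maps $u,v$, restrict them to the atoms, and define the candidate unit $X=\{v(e):e\in E\}$ together with $Y=\{u(e):e\in E\}$. Two verifications then carry the argument. First, summability: since $v(e)\in eR(e)\gamma$ and distinct atoms are orthogonal, $fv(e)\neq0$ forces $f=e$, giving right summability immediately, while left summability follows from the observation that $\{(e)\gamma\}_{e\in E}$ is itself an atomic set of local units---$\gamma$ being a ring automorphism preserves idempotents, orthogonality and the finite-sum structure---so for each fixed $f$ only finitely many products $(e)\gamma f$ are nonzero; the argument for $Y$ is symmetric. Second, invertibility: orthogonality of atoms together with the identities $v(e)u(e)=e$ and $u(e)v(e)=(e)\gamma$ yields $v(e)u(e')=0=u(e)v(e')$ for $e\neq e'$, so the product sets collapse to $\sum XY=\sum_{e\in E}e$ and $\sum YX=\sum_{e\in E}(e)\gamma$, each of which acts as the identity by the local-units remark; hence $XY\equiv YX\equiv E$ and $Y=X^{-1}$, so $X\in U(R)$.

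It then remains to check $\tau_X=\gamma$: for $a\in R$ one has $(a)\tau_X=\left(\sum Y\right)a\left(\sum X\right)=\sum_{g,h\in E}u(g)\,a\,v(h)$, a finite sum; writing $u(g)=u(g)g$ and $v(h)=hv(h)$ turns each term into $u(g)(gah)v(h)=(gah)\gamma$ by the inner property at the atom pair $(g,h)$, and summing with $a=\sum_{g,h}gah$ and additivity of $\gamma$ gives $(a)\gamma$. I expect the genuine obstacle to be concentrated in the converse, specifically in the summability-and-invertibility step: transferring the pointwise data $u,v$ to honest summable sets requires the orthogonality relations among the $u(e),v(e)$ and the fact that $\gamma$ transports the atomic local-unit structure of $E$ onto that of $\{(e)\gamma\}$, and it is here that the non-unital bookkeeping (finiteness of all sums, well-definedness of the multipliers $\sum X,\sum Y$) must be handled with care rather than by appeal to a global identity element.
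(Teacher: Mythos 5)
Your proof is correct, but there is no internal proof to compare it with: the paper states this proposition without proof, importing it as Proposition 1.7 of \cite{And-D'Amb2} together with the surrounding machinery (Lemmas 1.4--1.6 and 1.8--1.9 of that reference). Your write-up is therefore a self-contained reconstruction, and it stays entirely within the formalism the paper quotes. The forward direction with $u(e)=\left(\sum X^{-1}\right)e$ and $v(f)=f\left(\sum X\right)$ is, as you say, bookkeeping, granted that $XY\equiv E$ yields $\left(\sum X\right)\left(\sum X^{-1}\right)a=a$ for all $a$; identifying $\lambda_{XY}$ with the composite of $\lambda_X$ and $\lambda_{X^{-1}}$ is precisely the content of the quoted Lemma 1.4/1.5, so you are entitled to it. In the converse, the decisive points are exactly the ones you isolate: orthogonality of atoms gives $v(e)u(e')=0=u(e)v(e')$ for $e\neq e'$, so the product sets collapse to $XY=E\cup\{0\}$ and $YX=(E)\gamma\cup\{0\}$; the first induces the identity multiplier because $\mathscr{U}_E$ is atomic with atoms $E$, and the second because $(\mathscr{U}_E)\gamma$ is again an atomic set of local units with atoms $(E)\gamma$ --- a fact the paper records explicitly, and which also underwrites your left-summability argument for $X$ and $Y$. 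Your appeal to $u(e)v(e)=(e)\gamma$ and $v(e)u(e)=e$ at atoms is likewise covered, since the paper states these as consequences of the definition of inner. What your route buys over the paper's citation is a verification that the Anderson--D'Ambrosia argument transfers verbatim to the present setting of square-free rings with local units; what the citation buys is brevity.
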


For
each $e_i \in E$, its $E$-\textit{block} is the set
$$E[e_i]=\{e_j \in E|~e_j \sim e_i\}=E \cap S[e_i]$$
of all idempotents of $S$ equivalent to $e_i$.  We shall assume that each $E$-block is ordered.  If the set $E[e_i]$ is finite, there is a bijection
$$\omega: E[e_i] \rightarrow \{ 1, \ldots, |E[e_i]| \}$$
so that each element of $E[e_i]$ is labeled with a number between 1 and the cardinality of $E[e_i]$.
If the set $E[e_i]$ is not finite, let $\omega_{e_i}$ be a bijection of $E[e_i]$ onto the first ordinal of cardinality of $E[e_i]$.  For each $e_i \in E$ set $\omega_{i}(\overline{e}_i)$ to be the least element of $\omega_{e_i}(E[e_i])$ so that $\overline{E}=\{ \omega_{i}(\overline{e_i}) |~e_i \in E \}$ is a complete set of $\sim$-representatives.  

For $\phi \in \Aut S$, we have $E^{\phi}=E$ and $E[e_i]^{\phi}=E[e_i^{\phi}]$ for all $e_i \in E$.  We say that $\phi$ is \textit{normal} if
$$\omega_{e_i^{\phi}} (e_i^{\phi})=\omega_{i}(e_i)$$
for all $e_i \in E$.

An automorphism $\gamma \in \Aut R$ of the square-free ring $R=D^{\alpha}_{\xi} S$ is \textit{normal} if for each $e_i \in E$,
$$(e_i) \gamma \in E \hspace{.3cm} \mbox{and} \hspace{.3cm} \omega_{(e_i)\gamma} ((e_i)\gamma)=\omega_i(e_i).$$

For $\gamma \in \Aut{R}$, $(\mathscr{U}_E)\gamma$ is an atomic set of local units for $R$ with atoms $(E)\gamma$.  By the Azumaya-Krull-Schmidt Theorem (see Theorem 12.6 of \cite{frank} and Lemma 1.8 of \cite{And-D'Amb2}), there is an inner automorphism $\tau_X \in \Inn{R}$ such that $ \gamma \tau_X$ is normal. 

Now, as in the artinian case, we write a square-free ring $R$ as $D^{\alpha}_{\xi} S$ for some square-free semigroup $S$ with idempotent set $E$, division ring $D$, and normal $(\alpha, \xi) \in Z^2(S, D)$.  All of the consequences from Section 2 of \cite{Montgomery1} can be extended to square-free rings with local units.  In particular, we have the following.

\begin{theorem} \label{leftiso}
Let $S$ be a square-free semigroup, and let $[\alpha, \xi],~[\beta, \zeta] \in \thin$.  Then
	$$ D^{\beta}_{\zeta} S \cong D^{\alpha}_{\xi} S$$
if and only if there exists an automorphism $\phi \in \mbox{Aut} (S)$ such that
	$$  [\beta, \zeta] = [\alpha^{\phi}, \xi^{\phi}].$$
\end{theorem}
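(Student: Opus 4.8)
The plan is to prove the two implications separately. The backward implication—that $[\beta,\zeta]=[\alpha^{\phi},\xi^{\phi}]$ for some $\phi\in\Aut(S)$ yields $D^{\beta}_{\zeta}S\cong D^{\alpha}_{\xi}S$—is precisely Lemma \ref{rightiso}, so nothing new is required there. The whole content lies in the forward implication: starting from an abstract ring isomorphism $\Psi\colon D^{\beta}_{\zeta}S\to D^{\alpha}_{\xi}S$, I would manufacture a semigroup automorphism $\phi$ together with a transforming pair $(\mu,\eta)\in G(S,D)$ that witnesses $[\beta,\zeta]=[\alpha^{\phi},\xi^{\phi}]$ through Equations \ref{stuff1} and \ref{stuff2}.

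First I would normalize $\Psi$. Since $\Psi$ is a ring isomorphism, $(E)\Psi$ is an atomic set of local units for $R=D^{\alpha}_{\xi}S$ with atoms $(E)\Psi$; by the Azumaya--Krull--Schmidt uniqueness of atoms (the same input invoked in the normality discussion preceding this theorem) there is an inner automorphism $\tau_X\in\Inn R$ with $(E)\Psi\tau_X=E$. Because $\tau_X$ is an automorphism of $R$, the composite $\Psi\tau_X$ is again an isomorphism between the same two rings, and the conclusion we seek concerns only these rings; hence I may replace $\Psi$ by $\Psi\tau_X$ and assume from the outset that $\Psi$ carries $E$ bijectively onto $E$.

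Next I would read off the three pieces of data. The bijection of $E$ induced by $\Psi$ defines a map $\phi$ on idempotents; since $s_{ij}=e_i s_{ij}e_j$ forces $\Psi(s_{ij})\in e_{\phi(i)}R\,e_{\phi(j)}=D\,s_{\phi(i)\phi(j)}$, I obtain scalars $\eta(s_{ij})\in D^{*}$ with $\Psi(s_{ij})=\eta(s_{ij})\,s_{\phi(i)\phi(j)}$, which extends $\phi$ to all of $S$, while restricting $\Psi$ to the division rings $e_i(D^{\beta}_{\zeta}S)e_i\cong D$ produces automorphisms $\mu_i\in\Aut(D)$. That $\phi$ is a semigroup automorphism follows because $\Psi$ and $\Psi^{-1}$ preserve nonvanishing of products: $s_{ij}\cdot s_{jk}\neq\theta$ in $S$ iff the corresponding ring product is nonzero iff its $\Psi$-image is nonzero iff $\phi(s_{ij})\cdot\phi(s_{jk})\neq\theta$, so $\phi$ respects multiplication and, being a bijection of $S$, lies in $\Aut(S)$.

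Finally I would apply $\Psi$ to the two defining relations of $D^{\beta}_{\zeta}S$, namely $s_{ij}d=\beta_{ij}(d)s_{ij}$ and $s_{ij}s_{jk}=\zeta(s_{ij},s_{jk})s_{ik}$, and compare coefficients of the basis elements $s_{\phi(i)\phi(j)}$ and $s_{\phi(i)\phi(k)}$ in $D^{\alpha}_{\xi}S$. Using $\alpha^{\phi}_{ij}=\alpha_{\phi(i)\phi(j)}$ and $\xi^{\phi}(s_{ij},s_{jk})=\xi(s_{\phi(i)\phi(j)},s_{\phi(j)\phi(k)})$, the first relation collapses to Equation \ref{stuff1} and the second to Equation \ref{stuff2} for the pair $(\mu,\eta)$ acting on $(\alpha^{\phi},\xi^{\phi})$, giving $(\mu,\eta)*(\alpha^{\phi},\xi^{\phi})=(\beta,\zeta)$ and hence $[\beta,\zeta]=[\alpha^{\phi},\xi^{\phi}]$. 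I expect the main obstacle to be the normalization step: one must justify that passing to $\Psi\tau_X$ is harmless, so that ``$\Psi$ permutes $E$'' may be assumed, since only then does $\Psi$ induce a genuine automorphism of $S$ rather than a mere Morita-type equivalence. Once $E$ is preserved, the coefficient matching is routine bookkeeping governed by associativity, in the same spirit as the coefficient comparisons in the proof of Theorem \ref{classification-rlu}.
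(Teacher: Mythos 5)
Your proof is correct and follows essentially the route the paper intends: the paper does not write out an argument but asserts the theorem by extending Section 2 of \cite{Montgomery1} via the machinery developed immediately beforehand (inner automorphisms from summable sets and the Azumaya--Krull--Schmidt normalization of an isomorphism so that it permutes $E$), which is exactly your normalization of $\Psi$ followed by coefficient matching, with the backward implication delegated to Lemma \ref{rightiso} just as the paper does. The only point worth making explicit is that your identification $e_{\phi(i)}Re_{\phi(j)}=Ds_{\phi(i)\phi(j)}$ tacitly uses the paper's standing convention that the cocycles are chosen normal, which Lemma \ref{rightiso} licenses.
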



Theorem \ref{leftiso} gives the means to determine when a square-free ring has a related square-free algebra structure.  
If $A=K_{\zeta} S$ is a square-free $K$-algebra and $D$ is some division ring with $\mbox{Cen}(D)=K$, then $D \otimes_K A$ is a square-free ring.  The division ring and square-free semigroup (respectively) of $R=D \otimes_K A$ are easily seen to be isomorphic to $D$ and $S$ (respectively).

Any square-free ring $R=D_{\xi}^{\alpha} S$ isomorphic to a ring of the form $D \otimes_K A$, with $A=K_{\zeta} S$ a square-free $K$-algebra,
is called a \textit{square-free D-algebra}.

We now have the following.

\begin{corollary} \label{algebra}
Let $S$ be a square-free semigroup with a set of idempotents $E$.  Let $D$ be a division ring with $\Cen(D)=K$ and let $(\alpha, \xi) \in Z^2(S, D)$.  Then $R=D^{\alpha}_{\xi}S$ is a $D$-algebra if and only if $[\alpha, \xi]=[1, \zeta]$ for some $\zeta \in Z^2(S, K)$.
\end{corollary}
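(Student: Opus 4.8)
The plan is to collapse the statement ``$R$ is a $D$-algebra'' into the single ring-isomorphism condition $D^{\alpha}_{\xi} S \cong D^{1}_{\zeta} S$ for some $\zeta \in Z^2(S,K)$, and then to read the Corollary off Theorem \ref{leftiso} and Lemma \ref{rightiso}. The one genuinely new ingredient, which I would isolate first, is the ring isomorphism
$$ D \otimes_K K_{\zeta} S \cong D^{1}_{\zeta} S $$
valid for every $\zeta \in Z^2(S,K)$. Both sides are left $D$-vector spaces with basis $S^*$, and the assignment $d \otimes s_{ij} \mapsto d\, s_{ij}$ is $K$-balanced and bijective on the basis. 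It respects products because in $D^{1}_{\zeta} S$ one has $s_{ij} d = \alpha_{ij}(d) s_{ij} = d\, s_{ij}$ (the automorphism part being trivial), while each structure constant $\zeta(s_{ij}, s_{k\ell})$ lies in $K = \Cen(D)$ and therefore commutes with every element of $D$. This identifies the square-free $D$-algebras with exactly the rings of the form $D^{1}_{\zeta} S$, $\zeta \in Z^2(S,K)$.

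For the converse direction, suppose $[\alpha, \xi] = [1, \zeta]$ with $\zeta \in Z^2(S,K)$. Applying Lemma \ref{rightiso} with $\phi = \id$ (so that $[\alpha^{\phi}, \xi^{\phi}] = [\alpha, \xi] = [1, \zeta]$) gives $D^{\alpha}_{\xi} S \cong D^{1}_{\zeta} S$, and by the displayed isomorphism $D^{1}_{\zeta} S \cong D \otimes_K K_{\zeta} S$. Since $\zeta \in Z^2(S,K)$, the algebra $K_{\zeta} S$ is one of the square-free $K$-algebras of \cite{And-D'Amb2}, so $R$ is by definition a square-free $D$-algebra.

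For the forward direction, assume $R = D^{\alpha}_{\xi} S$ is a $D$-algebra, i.e. $R \cong D \otimes_K K_{\zeta_0} S$ for some $\zeta_0 \in Z^2(S,K)$. The displayed isomorphism turns this into $D^{\alpha}_{\xi} S \cong D^{1}_{\zeta_0} S$, and Theorem \ref{leftiso} then furnishes $\phi \in \Aut(S)$ with $[1, \zeta_0] = [\alpha^{\phi}, \xi^{\phi}]$. I would apply $\phi^{-1}$, using the fact (recorded just after the definition of the $\Aut(S)$-action) that $[\,\cdot\,]=[\,\cdot\,]$ is preserved under that action, to obtain $[\alpha, \xi] = [1^{\phi^{-1}}, \zeta_0^{\phi^{-1}}]$. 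Since $1^{\phi^{-1}} = 1$ and $\zeta_0^{\phi^{-1}}(s_{ij}, s_{jk}) = \zeta_0(s_{ij}^{\phi^{-1}}, s_{jk}^{\phi^{-1}}) \in K$, the pulled-back cocycle $\zeta := \zeta_0^{\phi^{-1}}$ again lies in $Z^2(S,K)$, giving $[\alpha, \xi] = [1, \zeta]$ as required.

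The only non-formal step is the isomorphism $D \otimes_K K_{\zeta} S \cong D^{1}_{\zeta} S$; everything after that is bookkeeping with the two group actions already set up in Section \ref{cohomology}. I expect the point needing the most care to be the verification that the $\Aut(S)$-action carries the subgroup $Z^2(S,K)$ into itself, so that $\zeta_0^{\phi^{-1}}$ remains $K$-valued with trivial automorphism part; this is precisely what guarantees that the cocycle $\phi^{-1}$ produces still has the special shape $(1, \zeta)$ needed to close the forward direction.
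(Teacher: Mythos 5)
Your proof is correct, but its forward (hard) direction takes a genuinely different route from the paper's. Both arguments share the easy direction and the key identification $D \otimes_K K_{\zeta} S \cong D^{1}_{\zeta} S$ (the paper's map $d \otimes k s_{ij} \mapsto dk s_{ij}$). For the converse, however, the paper does \emph{not} invoke Theorem \ref{leftiso}: it takes the given isomorphism $\gamma \colon D \otimes_K K_{\zeta} S \to R$, normalizes it by an inner automorphism (Lemma 1.8 of \cite{And-D'Amb2}) so that $(1 \otimes e_i)\gamma = e_i$, and then by direct computation extracts $\mu_i \in \Aut(D)$ from $(d \otimes e_i)\gamma = \mu_i(d)e_i$ and $\eta(s_{ij}) = d_{ij}$ from $(1 \otimes s_{ij})\gamma = d_{ij} s_{ij}$, verifying the two identities that say $(\mu, \eta) * (\alpha, \xi) = (1, \zeta)$ --- an explicit element of $G(S,D)$ realizing the cohomology, with the \emph{same} $\zeta$ and no $\Aut(S)$-twist appearing at all. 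You instead use both directions of Theorem \ref{leftiso} as a black box: from $D^{\alpha}_{\xi} S \cong D^{1}_{\zeta_0} S$ you obtain $\phi \in \Aut(S)$ with $[1, \zeta_0] = [\alpha^{\phi}, \xi^{\phi}]$, then dispose of $\phi$ by pulling back along $\phi^{-1}$, using the recorded equivariance of $[\,\cdot\,,\,\cdot\,]$ under the $\Aut(S)$-action; note you did not even need to check $K$-valuedness by hand, since the paper observes (after Equation \ref{twistauto}) that any cocycle with trivial automorphism part automatically has central values, so $\zeta := \zeta_0^{\phi^{-1}} \in Z^2(S,K)$ is immediate. What your route buys: it is shorter, and the $\phi$-twist transparently absorbs the possibility that the isomorphism permutes the projectives $Re_i$ --- a point the paper's ``assume without loss of generality $(1 \otimes e_i)\gamma = e_i$'' (resting on the claim $R(1 \otimes e_i)\gamma \cong Re_i$) quietly glosses over. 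What it costs: your argument is only as strong as Theorem \ref{leftiso}, which this paper states without proof (deferring to the extension of \cite{Montgomery1}), whereas the paper's computation is self-contained modulo Lemmas 1.8 and 1.9 of \cite{And-D'Amb2} and, as a bonus, exhibits the cobounding pair $(\mu, \eta)$ explicitly rather than merely asserting its existence.
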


\begin{proof}
If $[\alpha, \xi]=[1, \zeta]$, then by Theorem \ref{leftiso}, $R=D^{\alpha}_{\xi}S$ is isomorphic to $D^1_{\zeta} S=D_{\zeta} S$.  A straightforward check shows $D \otimes_K K_{\zeta} S \cong D_{\zeta} S$ via the map
$$d \otimes k s_{ij} \longrightarrow dks_{ij},$$
and it follows that $R$ is isomorphic to a $D$-algebra.

For the converse, let $R=D^{\alpha}_{\xi}S$ be a square-free ring, where (by Lemma \ref{rightiso}) $(\alpha, \xi)$ is normal, and suppose $R$ is a square-free $D$-algebra.  Then there is a square-free $K$-algebra $K_{\zeta} S$, with $\zeta \in Z^2(S, K)$, and an isomorphism (written as a right operator) $\gamma: D^{\alpha}_{\xi}S \rightarrow D \otimes_K K_{\zeta} S$.  For each $e_i \in E$, $(1 \otimes e_i)\gamma$ is a primitive idempotent.  So the set $E'=\{ (1 \otimes e_i) \gamma ~|~~e_i \in E\}$ is a set of atoms for $R$ and $\gamma: E \rightarrow E'$ is a bijection with $R(1 \otimes e_i)\gamma \cong Re_i$ for all $e_i \in E$.   By Lemma 1.8 of \cite{And-D'Amb2}, for each $(1 \otimes e_i)\gamma$, there is a summable set $X \subseteq R$ and an inner automorphism $\tau_X  \in \Aut(R)$ with
$$[(1 \otimes e_i)\gamma] \tau_X=e_i. $$
Assume without loss of generality that $(1 \otimes e_i)\gamma=e_i$.
Then for $1 \otimes s_{ij}=(1 \otimes e_i) (1 \otimes s_{ij} ) (1 \otimes e_j)$,
\begin{align*}
(1 \otimes s_{ij} )\gamma=e_i (1 \otimes s_{ij}) \gamma e_j
\in e_iR e_j = Ds_{ij}.
\end{align*}
So for each $s_{ij} \in S$, there is a $d_{ij} \in D$ such that 
$$ (1 \otimes s_{ij} )\gamma=d_{ij} s_{ij}.$$  
Also, since $( d \otimes e_i) \gamma=( 1 \otimes e_i)\gamma( d \otimes e_i)\gamma( d \otimes e_i)\gamma \in e_iRe_i = De_i$, for each $d \in D$ and $e_i \in E$, there is an automorphism
 $$\mu_i : D \rightarrow D$$
 with $(d \otimes e_i) \gamma =\mu_i(d)e_i$.  Then for $s_{ij} \cdot s_{jk}=s_{ik} \in S^*$ we have
\begin{align*}
\mu_i [\zeta(s_{ij}, s_{jk}) ]d_{ik} s_{ik} &=
(\zeta(s_{ij}, s_{jk})  \otimes e_i ) \gamma 
(1 \otimes s_{ik} ) \gamma \\ 
&=
[ ( \zeta(s_{ij}, s_{jk})  \otimes e_i ) (1 \otimes s_{ik} )]\gamma  \\ 
&=[ (1 \otimes s_{ij} ) (1 \otimes s_{jk} )]\gamma=  (1 \otimes s_{ij}) \gamma(1 \otimes s_{jk})\gamma
\\ 
 &=d_{ij} s_{ij} d_{jk} s_{jk}= d_{ij} \alpha_{ij} (d_{jk}) \xi (s_{ij}, s_{jk} ) s_{ik}.
\end{align*}
Equating coefficient of $s_{ik}$ gives
\begin{equation}\label{dequation}
\mu_i [\zeta(s_{ij}, s_{jk}) ] d_{ik}=d_{ij} \alpha_{ij} (d_{jk}) \xi (s_{ij}, s_{jk} ).
\end{equation}

For each $d_{ij}\in D$, let $\tau_{ij} \in \Aut(D)$ be the inner automorphism
$$\tau_{ij}(x)=d_{ij} x d_{ij}^{-1}.$$
For $d \in D$ 
\begin{align*}
d_{ij} s_{ij} \mu_j(d)=d_{ij} s_{ij} \mu_j(d)e_j
&=(1 \otimes s_{ij})\gamma (d \otimes e_j)\gamma \\ 
&= (d \otimes s_{ij})\gamma =(d \otimes e_i )\gamma (1 \otimes s_{ij})\gamma =
\mu_i (d) d_{ij} s_{ij}
\end{align*}
and
\begin{align*}
d_{ij} s_{ij} \mu_j(d)&=
d_{ij} \alpha_{ij}( \mu_j(d) ) s_{ij} \\ 
&=d_{ij} \alpha_{ij}( \mu_j(d) )d_{ij}^{-1} d_{ij}s_{ij}=
\tau_{ij} [ \alpha_{ij} (\mu_j(d) )] d_{ij}s_{ij}.
\end{align*}
Now it follows that
\begin{equation}\label{onemap}
\mu_i(d)=\tau_{ij} [ \alpha_{ij}( \mu_j(d) )]
\end{equation}
for all $d \in D$.
Note that for each $i$, the map $\mu_i$ is an element of $F^0(S, \Aut(D))$. 
Our selection of $d_{ij}$ defines a map $\eta \in F^1 (S, D^*)$ such that
$$\eta(s_{ij})=d_{ij}.$$
Hence, from Equation \ref{dequation} and Equation \ref{onemap} there is $(\mu, \eta) \in G(S, D)$ with
\begin{equation*} 
\mu_i [\zeta(s_{ij}, s_{jk}) ] \eta(s_{ik})=\eta(s_{ij}) \alpha_{ij} (\eta(s_{jk})) \xi (s_{ij}, s_{jk} )
\end{equation*}
and 
\begin{equation*}
\mu_i \circ   \mu_j ^{-1} =\tau_{ij} \circ \alpha_{ij}.
\end{equation*} 
where $\tau_{ij}(x)=d_{ij} xd_{ij}^{-1}=\eta(s_{ij}) x \eta(s_{ij})^{-1}$.   This shows that
$$(\mu, \eta) *(\alpha, \xi)=(1, \zeta) \in Z^2(S, K).$$
\end{proof}

In \cite{And-D'Amb3}, the authors gave a description in terms of a solvable basis to determine when a square-free artinian ring is a $D$-algebra. (See Theorem 4.8 of \cite{And-D'Amb3}.)  By applying Theorem \ref{algebra} to artinian rings, we now have a characterization in cohomological terms of when a square-free ring $R=D_{\xi}^{\alpha}S$ is a $D$-algebra.

\section{The Automorphism Groups of Square-Free Rings}

We now extend the results from \cite{Montgomery1} to square-free rings with local units.  Since much of the argument follows in the exact same way, when possible, we will omit details and refer the interested reader to Section 3 of \cite{Montgomery1}.

Let $R$ be a square-free ring.  By Lemma \ref{reduced} and Theorem \ref{classification-rlu}, we may assume $R = D^{\alpha}_{\xi} S$, where $S$ is a square-free semigroup with idempotent set $E$, $D$ is the canonical division ring of $R$ and $(\alpha, \xi) \in Z^2(S, D)$ is a normal 2-cocycle that is trivial on all $\sim$-blocks of $S$.  We will consider the outer automorphism group
$$\mbox{Out} R  =\Aut R/\Inn R$$
of $R$, where $\mbox{Aut } R$ is the group of all ring automorphisms and $\mbox{Inn }R$ is the normal subgroup of $\mbox{Aut } R$ consisting of maps $\tau_X$ for summable $X \subseteq R$.  As in the previous sections, automorphisms of $R$ and $S$ will be written as right operators.  Automorphisms of $D$ and of $\Aut R$ will be written as left operators.  Finally, $1_D$ and $1_S$, respectively, will each denote the identity automorphism of the division ring $D$ and the semigroup $S$.  An unlabeled $1$ will denote the constant function to the multiplicative identity of $D$ in either $F^1(S, D)$ or $F^2(S, D)$.  

\begin{definition}
Given $(\alpha, \xi) \in Z^2(S, D)$, a pair $(\mu, \eta) \in G(S, D)$ is called an \textit{$(\alpha, \xi)$ 1-cocyle} if
\begin{equation}
\mu_i \circ \alpha_{ij} \circ \mu_j^{-1}= \tau_{\eta(s_{ij})} \circ \alpha_{ij}
\end{equation}
for all $s_{ij} \in S$, and
\begin{equation}
\mu_i[ \xi(s_{ij}, s_{jk})]= \eta(s_{ij}) \alpha_{ij}( \eta(s_{jk})) \xi(s_{ij},s_{jk}) \eta( s_{ij} \cdot s_{jk})^{-1}
\end{equation}
for $(s_{ij}, s_{jk}) \in S^{<2>}$.  The set of all $(\alpha, \xi)$ 1-cocyles forms a subgroup of $G(S,D)$ and will be denoted by 
$$Z^1_{ (\alpha, \xi)} (S, D)=\{ (\mu, \eta) \in G(S, D) |~(\mu, \eta) \mbox{ is an } (\alpha, \xi) \mbox{ 1-cocycle} \}.$$
\end{definition}

There is a group action $\star: F^0 (S, D) \times Z^1_{ (\alpha, \xi)} (S, D) \rightarrow Z^1_{ (\alpha, \xi)} (S, D)$ given by
\begin{align*}
& \nu \star (\mu, \eta)=(\hat{\mu}, \hat{\eta}) 
\end{align*}
where 
\begin{align*}
& (\hat{\mu}_i)(d)= \nu(e_i) \mu_i(d)\nu(e_i)^{-1} \hspace{.5cm}  \mbox{and} \hspace{.5cm}  \hat{\eta}(s_{ij})=\nu(e_i) \eta(s_{ij}) \alpha_{ij} ( \nu(e_j)^{-1})
\end{align*}
for $d \in D$, $s_{ij} \in S$, $\nu \in F^0 (S, D^*)$ and $(\mu, \eta) \in Z^1_{ (\alpha, \xi)} (S, D)$ .

\begin{definition}
The orbit of $(1_D, 1)$ under the action $\star$ is called the set of $(\alpha, \xi)$  \textit{1-coboundaries} and denoted by 
$$B^1_{ (\alpha, \xi)} (S, D)=\{ \nu \star (1_D, 1) |~ \nu \in F^0(S, D) \}.$$  
Observe that $(\mu, \eta) \in B^1_{ (\alpha, \xi)} (S, D)$ if and only if for each $s_{ij}\in S$
$$\mu_i=\tau_{\nu(e_i)} \mbox{ and } \eta(s_{ij})=\nu(e_i) \alpha_{ij}( \nu(e_j)^{-1}).$$
\end{definition}
A quick check shows $B^1_{ (\alpha, \xi)} (S, D)$ is a normal subgroup of $Z^1_{ (\alpha, \xi)} (S, D)$ for any $(\alpha, \xi) \in Z^2(S, D)$.

\begin{definition}
We define the $(\alpha, \xi)$ \textit{1-cohomology group of $S$ with coefficients units in $D$} to be the factor group
$$H^1_{ (\alpha, \xi)} (S, D)=Z^1_{ (\alpha, \xi)} (S, D)/B^1_{ (\alpha, \xi)} (S, D).$$
\end{definition}

For $(\mu, \eta) \in Z^1_{(\alpha, \xi)}(S, D)$, let $\sigma_{\mu \eta}: R \rightarrow R$ be the map defined  by
$$ (ds_{ij})\sigma_{\mu \eta}=\mu_i(d) \eta(s_{ij})s_{ij}$$
for $d \in D$ and $s_{ij}  \in S$.  By extending $\sigma_{\mu \eta}$ to preserve addition and utilizing the defining properties of $Z^1_{(\alpha, \xi)}(S, D)$, one can verify that $\sigma_{\mu \eta}$ is a ring homomorphism.  Then we have the following:

\begin{lemma}
The map
$$ \Lambda: H^1_{(\alpha, \xi)} (S, D) \rightarrow \Out R$$
defined by
$$ \Lambda:( B^1( S, D)) ( \mu, \eta) \rightarrow (\Inn R) \sigma_{\mu \eta}$$
for every $( \mu, \eta) \in Z^1_{(\alpha, \xi)} (S, D)$ is a group monomorphism.
\end{lemma}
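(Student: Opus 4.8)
The plan is to realise $\Lambda$ as the map induced on a quotient by a single homomorphism out of $Z^1_{(\alpha,\xi)}(S,D)$, so that well-definedness and injectivity are captured simultaneously by one kernel computation. Concretely, I first show that
$$\Phi \colon Z^1_{(\alpha,\xi)}(S,D) \longrightarrow \Aut R, \qquad (\mu,\eta)\longmapsto \sigma_{\mu\eta},$$
is a group homomorphism. Since each $\sigma_{\mu\eta}$ is already known to be a ring endomorphism, the crux here is the multiplicativity $\sigma_{(\mu,\eta)(\mu',\eta')}=\sigma_{\mu\eta}\,\sigma_{\mu'\eta'}$ (composition of right operators). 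I would verify this on a basis element $ds_{ij}$: applying $\sigma_{\mu\eta}$ and then $\sigma_{\mu'\eta'}$ yields $\mu'_i(\mu_i(d))\,\mu'_i(\eta(s_{ij}))\,\eta'(s_{ij})\,s_{ij}$, which matches $\sigma_{\hat\mu\hat\eta}(ds_{ij})$ for the product $(\hat\mu,\hat\eta)=(\mu'\mu,\,[\mu'\square\eta]\eta')$ prescribed by the group law of $G(S,D)$. Because $\sigma_{(1_D,1)}=\id_R$ and $Z^1_{(\alpha,\xi)}(S,D)$ is a group, each $\sigma_{\mu\eta}$ has the two-sided inverse $\sigma_{(\mu,\eta)^{-1}}$ and is therefore an automorphism; thus $\Phi$ does land in $\Aut R$. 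Composing with the quotient map $\Aut R\to\Out R$ gives a homomorphism $(\mu,\eta)\mapsto(\Inn R)\sigma_{\mu\eta}$.

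Next I would identify the kernel of this composite as exactly $B^1_{(\alpha,\xi)}(S,D)$; by the first isomorphism theorem this yields at once that $\Lambda$ is well-defined on $H^1_{(\alpha,\xi)}(S,D)=Z^1/B^1$ and that it is injective. For the inclusion $B^1\subseteq\ker$, take $(\mu,\eta)\in B^1$ with witness $\nu\in F^0(S,D^*)$, so that $\mu_i=\tau_{\nu(e_i)}$ and $\eta(s_{ij})=\nu(e_i)\alpha_{ij}(\nu(e_j)^{-1})$. Using the normality relations $s_{ij}d=\alpha_{ij}(d)s_{ij}$ and $\alpha_i=1$, I would compute $(ds_{ij})\sigma_{\mu\eta}=\nu(e_i)\,(ds_{ij})\,\nu(e_j)^{-1}$ and recognise this as the inner automorphism $\tau_X$ attached to the invertible summable set with $\sum X^{-1}=\sum_i\nu(e_i)e_i$ and $\sum X=\sum_j\nu(e_j)^{-1}e_j$; hence $\sigma_{\mu\eta}\in\Inn R$.

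The reverse inclusion $\ker\subseteq B^1$ is where I expect the real work. Suppose $\sigma_{\mu\eta}\in\Inn R$. The first observation is that $\sigma_{\mu\eta}$ must fix every atom: since $(e_i)\sigma_{\mu\eta}=\eta(e_i)e_i$ is again idempotent and $\eta(e_i)\in D^*$, one is forced to have $\eta(e_i)=1$, whence $(e_i)\sigma_{\mu\eta}=e_i$. I would then invoke the structural description of inner automorphisms: there are maps $u,v\colon\mathscr{U}_E\to R$ with $(x)\sigma_{\mu\eta}=u(e)xv(f)$ on $eRf$ and $u(e_i)v(e_i)=(e_i)\sigma_{\mu\eta}=e_i=v(e_i)u(e_i)$. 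Fixing the atoms forces $u(e_i),v(e_i)\in e_iRe_i\cong D$ to be mutually inverse units, so $u(e_i)=\nu(e_i)e_i$ and $v(e_i)=\nu(e_i)^{-1}e_i$ for a $\nu\in F^0(S,D^*)$. Evaluating $(ds_{ij})\sigma_{\mu\eta}=u(e_i)(ds_{ij})v(e_j)=\nu(e_i)\,d\,\alpha_{ij}(\nu(e_j)^{-1})\,s_{ij}$ and comparing with $\mu_i(d)\eta(s_{ij})s_{ij}$, first at $d=1$ and then for general $d$, gives precisely $\eta(s_{ij})=\nu(e_i)\alpha_{ij}(\nu(e_j)^{-1})$ and $\mu_i=\tau_{\nu(e_i)}$, i.e.\ $(\mu,\eta)\in B^1_{(\alpha,\xi)}(S,D)$. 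The main obstacle is thus the bookkeeping in this last step: correctly reading $\nu$ off the inner-automorphism data $u,v$ and confirming it is a genuine element of $F^0(S,D^*)$ rather than merely a family of local units, after which the coefficient-matching of $s_{ij}$ is routine.
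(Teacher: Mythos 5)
Your proposal is correct, and its skeleton matches the paper's: both arguments reduce the lemma to identifying $\{(\mu,\eta)\in Z^1_{(\alpha,\xi)}(S,D)\ :\ \sigma_{\mu\eta}\in\Inn R\}$ with $B^1_{(\alpha,\xi)}(S,D)$, and your inclusion $B^1_{(\alpha,\xi)}(S,D)\subseteq\ker$ (conjugation by the summable invertible set $\{\nu(e_i)e_i\ |\ e_i\in E\}$) is the same computation the paper performs. The differences lie in the other two ingredients. First, you explicitly check that $(\mu,\eta)\mapsto\sigma_{\mu\eta}$ is multiplicative for the $G(S,D)$ group law, so that $\Lambda$ is a homomorphism and the first isomorphism theorem applies; the paper never records this, jumping straight to ``it is enough to show'' the kernel statement, so your write-up fills in a step the paper leaves implicit. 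Second, and more substantively, for the reverse inclusion $\ker\subseteq B^1_{(\alpha,\xi)}(S,D)$ the paper invokes Lemma 1.9 of \cite{And-D'Amb2} to put the invertible summable sets into the canonical form $X=\{d_{ij}t_{ij}\}$, $Y=\{\hat{d}_{ij}t_{ij}\}$, and then extracts the diagonal coefficients $d_{ii}$, $\hat{d}_{jj}$ by sandwiching between $e_i$ and $e_j$; you instead use the $u$, $v$ description of inner automorphisms (the Proposition quoted in the paper from \cite{And-D'Amb2}) together with the observation that $\eta(e_i)=1$ forces $\sigma_{\mu\eta}$ to fix every atom, which pins down $u(e_i)$ and $v(e_i)$ as mutually inverse elements of the division ring $e_iRe_i$ and lets you read off $\nu$ directly. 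Both routes are sound and of comparable length: yours avoids the canonical-form lemma for summable sets and its coefficient bookkeeping, while the paper's works directly from the definition of $\Inn R$ as the collection of maps $\tau_X$. One point worth making explicit in your version: the idempotency argument giving $\eta(e_i)=1$, and the simplifications $e_i\cdot ds_{ij}=ds_{ij}$ and $s_{ij}\cdot e_j = s_{ij}$ you use when evaluating $u(e_i)(ds_{ij})v(e_j)$, all rely on $(\alpha,\xi)$ being normal ($\alpha_i=1_D$, $\xi(e_i,e_i)=1$, hence $\xi(e_i,s_{ij})=\xi(s_{ij},e_j)=1$), a standing assumption in this section that should be cited where it is used.
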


\begin{proof}
It is enough to show for  $(\mu, \eta) \in Z^1_{(\alpha, \xi)} (S, D)$ that $(\mu, \eta) \in B^1_{(\alpha, \xi)} (S, D)$ if and only if $\sigma_{\mu \eta} \in \mbox{Inn } R$.  First suppose $(\mu, \eta) \in B^1_{(\alpha, \xi)}(S, D) $.  Then by properties of $B^1_{(\alpha, \xi)}(S, D)$ there exists $\nu \in F^0 (S, D)$ such that 
\begin{align*}
& \mu_i (d)=\tau_{\nu(e_i)} (d)
\end{align*}
and
\begin{align*}
& \eta(s)= \nu(e_i) \alpha_s (\nu(e_j)^{-1})
\end{align*}
for $d \in D$ and $s_{ij}  \in S$.  The set $\nu E=\{ \nu(e_i)e_i| e_i \in E\}$ is summable and invertible.  Since $\alpha_{e_i}=\alpha_i =1_D$, $(\nu E)^{-1}=\{ \nu(e_i)^{-1} e_i |~ e_i \in E \}$.
Then for all $d \in D$ and $s_{ij}\in $S, 
\begin{align*}
( ds_{ij})\sigma_{\mu \eta} =\mu_i(d) \eta(s_{ij}) s_{ij}
 & =\tau_{\nu(e_i)} (d)   \nu(e_i)  \alpha_s ( \nu(e_j)^{-1} )s_{ij} \\
 &=\tau_{\nu(e_i)} (d)   \nu(e_i)  \alpha_s ( \nu(e_j)^{-1} ) s_{ij}   \\ 
&= \nu(e_i)  ds_{ij} \nu(e_j)^{-1}=  \nu(e_i)  d e_i \cdot s_{ij} \cdot \nu(e_j)^{-1} e_j \\ 
&=  \left( \sum \nu E \right) ds   \left( \sum (\nu E)^{-1}  \right) \\ 
&=(ds) \tau_{\nu E} 
\end{align*}
and it follows that $\sigma_{\mu \eta} \in \mbox{Inn } R$.

Now suppose $\sigma_{\mu \eta} \in \mbox{Inn } R$.  Then for all $d \in D^*=D \setminus \{0\}$ and $s_{ij} \in S$, there exists a summable, invertible set $X$ such that $(ds_{ij})\sigma_{\mu \eta}=(ds_{ij})\tau_X$.  By Lemma 1.9 of \cite{And-D'Amb2}, we may assume that there exist $d_{ij}, \hat{d}_{ij} \in D$ for $s_{ij} \in S^*$ with
$$X =\{d_{ij} t_{ij}:~t_{ij} \in S^*\} \hspace{.3cm} \mbox{and} \hspace{.3cm} Y=\{ \hat{d}_{ij} t_{ij} :~t_{ij} \in S^* \}.$$
Then for each $d \in D$ and $s_{ij} \in S^*$
\begin{equation}\label{significant issue 3}
(ds_{ij})\sigma_{\mu \eta}= (ds_{ij})\tau_X   \Rightarrow \mu_i(d) \eta(s_{ij}) s_{ij}= (\sum X) (ds_{ij}) (\sum Y)  .
\end{equation}
Setting $d=1$ in Equation \ref{significant issue 3} and noting
$$ \eta(s_{ij}) s_{ij}= \eta(s_{ij}) e_i \cdot s_{ij} \cdot e_{j}=e_i( \eta(s_{ij}) s_{ij}) e_j$$
gives
\begin{align*}
\eta(s_{ij})s_{ij}= (\sum X) s_{ij} (\sum Y) &=e_i \left(\sum d_{ij} t_{ij} \right)e_i \cdot s_{ij} \cdot fe_j \left( \sum  \hat{d}_{ij} t_{ij} \right) e_j.
\end{align*}
Since $S$ is square free, $e_i \left(\sum d_{ij} t_{ij} \right)e_i =d_{ii} e_i$, $e_j  \left( \sum  \hat{d}_{ij} t_{ij} \right) e_j=\hat{d}_{jj} e_j$ and
\begin{align*}
\eta(s_{ij}) s_{ij}=d_{ii}  s_{ij} \hat{d}_{jj}=d_{ii} \alpha_{ij} (\hat{d}_{jj}) s_{ij}.
\end{align*}
It follows that
\begin{equation}\label{eta relation}
\eta(s_{ij}) = d_{ij} \alpha_{ij} (\hat{d}_{jj}).
\end{equation}

Since $\sigma_{\mu \eta} \in \mbox{Aut }R$ we have $(e_i^2)\sigma_{\mu \eta}=(e_i)\sigma_{\mu \eta}(e_i) \sigma_{\mu \eta}$.  So 
$(e_i)\sigma_{\mu \eta}=(e_i)\sigma_{\mu \eta}(e_i)\sigma_{\mu \eta}$
and $(e_i)\sigma_{\mu \eta}=\eta(e_i) e_i$ implies
\begin{align*}
\eta(e_i) e_i&=( \eta(e_i) e_i) (\eta(e_i) e_i) \\
&=\eta(e_i)  \alpha_{i}( \eta(e_i)) e_i^2 \\
&=\eta(e_i) \eta(e_i) e_i.
\end{align*}
So $\eta(e_i)=1$ and setting $s_{ij}=e_i$ in Equation \ref{eta relation} gives 
$$1=d_{ii} \hat{d}_{ii} \Rightarrow \hat{d}_{ii}=d_{ii}^{-1}.$$
Abbreviate $d_{ii}=d_i$ and define $\nu \in F^0( S, D)$ by
$$ \nu(e_i)=d_i$$
for $e_i \in E$.  By Equation \ref{eta relation}
\begin{equation}\label{inner and boundary1}
\eta(s_{ij})= \nu(e_i) \alpha_{ij} (\nu(e_j)^{-1})
\end{equation}
 and
 \begin{align*}
\mu_i(d) \eta(s_{ij}) s_{ij}= (\sum X) (ds_{ij}) (\sum Y) \Rightarrow \mu_i(d) d_i \alpha_{ij} (d_j^{-1})s_{ij}= d_i d \alpha_{ij} (d_j^{-1}) s_{ij}
\end{align*}
from which
\begin{equation}\label{inner and boundary2}
\mu_i(d)=d_i d d_i^{-1}= \tau_{\nu(e_i)} (d).
\end{equation}
It now follows from Equations \ref{inner and boundary1} and \ref{inner and boundary2} that $( \mu, \eta) \in B^1 (S, D)$.
\end{proof}

Recall, an automorphism $\gamma \in \Aut R$ of the square-free ring $R=D^{\alpha}_{\xi} S$ is \textit{normal} if for each $e_i \in E$,
$$(e_i) \gamma \in E \hspace{.3cm} \mbox{and} \hspace{.3cm} \omega_{(e_i)\gamma} ((e_i)\gamma)=\omega_{i}(e_i).$$
The set of normal automorphisms  of $R$ forms a subgroup of $\Aut R$, which we denote by $\mbox{Aut}_0 R$.

Extending Lemma 2.2(a) of \cite{Montgomery1}, for each $\gamma \in \Aut_0 R$ there is an automorphism $\phi_{\gamma} \in \Aut S$
$$s^{\phi_{\gamma}}= \begin{cases} (e_i)\gamma \cdot S \cdot (e_j)\gamma,  & \quad \mbox{if} \quad s=e_i \cdot s \cdot e_j \neq \theta \vspace{.2cm} 
\cr  \theta & \quad  \mbox{if} \quad s=\theta.
\end{cases} $$

Define $\Phi_0: \Aut_0 R \longrightarrow \Aut_0 S$ by $\Phi_0 (\gamma)=\phi_{\gamma}$.  
We claim that $\Inn R \cap \Aut_0 R \subseteq \Ker \Phi_0$.  Let $X \in \Sum R$ be invertible with $Y=X^{-1}$.  Assume that $\tau_X \in \Aut_0 R$ and let $e_i \in E$.  Since $\tau_X$ is normal, $e_j=(e_i)\tau_X \in E$.  

Set 
\begin{align*}
y=e_j((e_i)\lambda_Y) \hspace{.5cm} \mbox{and} \hspace{.5cm} x=((e_i)\rho_X)e_j.
\end{align*}
Since $(a) \tau_X=(a) \lambda_Y \rho_X$ we have $yx= e_j ( e_i) \tau_X e_j=e_j$ and $xy= e_i ( e_j) \tau_Y e_i=e_i$ so that $Re_i \cong Re_j$ by Lemma 1.8 of \cite{And-D'Amb2}.  Since $\tau_X$ is normal, it follows that $e_i=e_j$ and $\Phi_0 (\tau_X)$ is the identity on $E$.  Then for each $s_{ij} \in S^*$, 
$$(s_{ij})\Phi_0 [\tau_X]=e_i \cdot S \cdot e_j =s_{ij}$$
and $\Phi_0 (\tau_X)=1_S$.
Next, let $\gamma \in \Aut{R}$.  Then $(\mathscr{U}_E)\gamma$ is an atomic set of local units for $R$ with atoms $(E)\gamma$.  By the Azumaya-Krull-Schmidt Theorem (see Theorem 12.6 of \cite{frank} and Lemma 1.8 of \cite{And-D'Amb2}), there is $\tau_X \in \Inn{R}$ such that $ \gamma \tau_X$ is normal.  
Therefore, there is a map
$$ \Phi: \Out R \longrightarrow \Aut S$$
given by
$$ \Phi: (\Out R) \gamma \rightarrow \phi_{\gamma} $$
for all $\gamma \in \Aut_0 R$.  A straightforward check shows that $\Phi$ is a group homomorphism.  The same argument as in Lemma 3.7 of \cite{Montgomery1} now applies and we have

\begin{align*}
1 \longrightarrow H^1_{(\alpha, \xi)} (S, D) \overset{\Lambda}{\longrightarrow } \Out R
\overset{\Phi}{\longrightarrow }\Aut S
\end{align*}
an exact sequence.

Recall that $\mbox{Aut }S$ acts on $\thin$.  For any element $[\alpha, \xi] \in \thin$, the stabilizer of the element $[\alpha, \xi] \in H^2(S, D)$ is the set
$$\mbox{Stab}_{ [\alpha, \xi]}(\mbox{Aut }S)= \{ \phi \in \mbox{Aut}(S)| ~~[\alpha, \xi]= [\alpha^{\phi}, \xi^{\phi}]\}.$$
That is, $\phi \in  \mbox{Stab}_{ [\alpha, \xi]}(\mbox{Aut }S)$ if and only if there exists an element $(\mu, \eta) \in G(S, D)$ such that

\begin{equation}
\mu_i \circ \alpha_{ij} \circ \mu_j^{-1}= \tau_{\eta(s_{ij})} \circ \alpha_{ij}^{\phi}
\end{equation}
and
\begin{equation}
\mu_i[ \xi(s_{ij}, s_{jk})]= \eta(s_{ij}) \alpha^{\phi}_{ij}( \eta(s_{jk})) \xi^{\phi}(s_{ij},s_{jk}) \eta( s_{ij} \cdot s_{jk})^{-1}
\end{equation}
for all $s_{ij} \cdot s_{jk}  \in S^*$.

The remaining arguments leading up to our main theorem now carry over, essentially verbatim, from Section 3 of \cite{Montgomery1}.  So we have the following.

\begin{theorem}\label{SES}
Let $R=D^{\alpha}_{\xi} S$ be a square-free ring with associated semigroup $S$, division ring $D$ and $(\alpha, \xi) \in Z^2(S, D)$.  The following sequence is exact:

\begin{align*}
1 \longrightarrow H^1_{(\alpha, \xi)} (S, D) \overset{\Lambda}{\longrightarrow } \Out R
\overset{\Phi}{\longrightarrow } \Stab {}_{(\alpha, \xi)} (\Aut S) \longrightarrow 1.
\end{align*}
If $[\alpha, \xi]=[1_D, 1] \in H^2(S, D)$, then $\Stab{}_{(\alpha, \xi)}(\Aut S)=\Aut S$ and the following sequence is exact:
\begin{align*}
1 \longrightarrow H^1_{(\alpha, \xi)} (S, D) \overset{\Lambda}{\longrightarrow } \Out R
\overset{\Phi}{\longrightarrow } \Aut S \longrightarrow 1.
\end{align*}
Moreover, this last sequence splits, and so in this case $\Out R$ is isomorphic to 
$$H^1_{(\alpha, \xi)}(S, D) \ltimes \Aut S.$$
\end{theorem}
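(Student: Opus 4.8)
The plan is to build on the two homomorphisms already in hand: $\Lambda$ is a monomorphism and the three-term sequence $1 \to H^1_{(\alpha,\xi)}(S,D) \overset{\Lambda}{\to} \Out R \overset{\Phi}{\to} \Aut S$ is exact, so it remains only to identify $\operatorname{im}\Phi$ with $\Stab_{(\alpha,\xi)}(\Aut S)$ and, in the trivial case, to produce a splitting. For the containment $\operatorname{im}\Phi \subseteq \Stab_{(\alpha,\xi)}(\Aut S)$, I take a normal $\gamma \in \Aut_0 R$ and extract from it the pair $(\mu,\eta) \in G(S,D)$, where $\mu_i \in \Aut(D)$ is induced by $\gamma$ on $e_iRe_i \cong D$ and $\eta(s_{ij}) \in D^*$ is the scalar with $(s_{ij})\gamma = \eta(s_{ij})\,s_{ij}^{\phi_\gamma}$. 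Applying $\gamma$ to the defining relations $s_{ij}d = \alpha_{ij}(d)s_{ij}$ and $s_{ij}s_{jk} = \xi(s_{ij},s_{jk})s_{ik}$ and equating coefficients --- exactly the computation carried out in Corollary \ref{algebra} --- forces the two stabilizer equations, so $[\alpha^{\phi_\gamma},\xi^{\phi_\gamma}] = [\alpha,\xi]$ and $\phi_\gamma \in \Stab_{(\alpha,\xi)}(\Aut S)$.

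For the reverse inclusion I must show $\Phi$ surjects onto the stabilizer. Given $\phi \in \Stab_{(\alpha,\xi)}(\Aut S)$, its defining property supplies $(\mu,\eta) \in G(S,D)$ satisfying the two stabilizer equations, and I define $\gamma \colon R \to R$ on the basis by
$$(d s_{ij})\gamma = \mu_i(d)\,\eta(s_{ij})\,s_{ij}^{\phi}$$
extended additively. This is the $\phi$-twisted analogue of the map $\sigma_{\mu\eta}$ of the preceding lemma, and the two stabilizer equations are precisely what is needed to verify that $\gamma$ respects both ring relations; since $(\mu,\eta)$ is invertible in $G(S,D)$ and $\phi \in \Aut S$, the map $\gamma$ is bijective. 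Composing with a suitable inner automorphism via the Azumaya--Krull--Schmidt argument already used, I may assume $\gamma$ is normal, and by construction $\phi_\gamma = \phi$, whence $\Phi((\Out R)\gamma) = \phi$. This yields $\operatorname{im}\Phi = \Stab_{(\alpha,\xi)}(\Aut S)$ and the exactness of the first sequence.

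When $[\alpha,\xi] = [1_D,1]$, the equivalence $[\alpha,\xi]=[\beta,\zeta]\Leftrightarrow[\alpha^{\phi},\xi^{\phi}]=[\beta^{\phi},\zeta^{\phi}]$ noted above, together with $1_D^{\phi} = 1_D$ and $1^{\phi} = 1$, gives $[\alpha^{\phi},\xi^{\phi}] = [1_D,1] = [\alpha,\xi]$ for every $\phi \in \Aut S$, so the stabilizer is all of $\Aut S$ and the second sequence is a special case of the first. To split it I invoke Theorem \ref{leftiso} to replace $R$ by the untwisted ring with representative $(\alpha,\xi) = (1_D,1)$, in which $s_{ij}d = d s_{ij}$ and $s_{ij}s_{jk} = s_{ik}$. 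For each $\phi \in \Aut S$ the map $\gamma_\phi$ with $(d s_{ij})\gamma_\phi = d\,s_{ij}^{\phi}$ is then visibly a ring automorphism, and $\gamma_\phi\gamma_\psi = \gamma_{\phi\psi}$ because $s^{\phi\psi} = (s^{\phi})^{\psi}$; hence $\phi \mapsto (\Inn R)\gamma_\phi$ is a homomorphism $\Aut S \to \Out R$ inverted by $\Phi$, so the sequence splits and $\Out R \cong H^1_{(\alpha,\xi)}(S,D) \ltimes \Aut S$.

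I expect the surjectivity step to be the main obstacle. Checking that the $\phi$-twisted $\gamma$ is a genuine ring automorphism is more delicate than the corresponding verification for $\sigma_{\mu\eta}$, because $\phi$ now permutes the basis: the computation must reconcile $\xi(s_{ij},s_{jk})$ with $\xi^{\phi}(s_{ij},s_{jk})$ and track how the idempotent labels --- hence the normalization $\omega$ --- behave under $\phi$. It is exactly here that the full stabilizer equations, rather than the plain $(\alpha,\xi)$ $1$-cocycle equations governing $\Ker\Phi$, enter the argument.
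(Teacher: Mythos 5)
Your proposal follows the same route the paper intends: the paper's own proof is a deferral to Theorem 3.9 of \cite{Montgomery1}, and the three steps you supply --- image of $\Phi$ inside the stabilizer, surjectivity via the $\phi$-twisted map $(ds_{ij})\gamma=\mu_i(d)\eta(s_{ij})s_{ij}^{\phi}$, and the splitting $\phi\mapsto\gamma_\phi$ in the untwisted case --- are exactly the arguments being imported; your cocycle computations in the first and third steps are correct.

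The genuine gap is in the surjectivity step, at the sentence ``Composing with a suitable inner automorphism \ldots I may assume $\gamma$ is normal, and by construction $\phi_\gamma=\phi$.'' The map $\Phi$ is defined only through normal representatives, and a normal automorphism $\gamma_0\in\Aut_0 R$ induces a normal $\phi_{\gamma_0}\in\Aut_0 S$ (it preserves the labels $\omega$); hence the image of $\Phi$ lies in $\Aut_0 S$ no matter what. If your $\phi\in\Stab_{(\alpha,\xi)}(\Aut S)$ is not normal, the $\gamma$ you build genuinely induces $\phi$ but is not normal, and after composing with $\tau_X$ to normalize it, the induced semigroup automorphism is $\phi$ corrected by a permutation within each $\sim$-block --- it is no longer $\phi$. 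This is not a removable defect: take $R=M_n(\mathbb{Q})=\mathbb{Q}^{1}_{1}S$ with $S$ the $n\times n$ matrix-unit semigroup and trivial cocycle, and let $\phi$ be a transposition of indices. Your recipe (with $(\mu,\eta)=(1,1)$) produces conjugation by a permutation matrix, which is inner, so its class in $\Out R$ maps under $\Phi$ to $1_S$, not to $\phi$; indeed $\Out M_n(\mathbb{Q})=1$ by Skolem--Noether, while $\Stab_{(1,1)}(\Aut S)=\Aut S$ is the symmetric group on $n$ letters, so no argument can make $\Phi$ surject onto the stabilizer computed inside all of $\Aut S$.

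What the imported argument actually proves --- and how your step should be repaired --- is surjectivity onto $\Stab_{(\alpha,\xi)}(\Aut S)\cap\Aut_0 S$, the stabilizer inside the normal automorphisms, which is how the third term of the sequence has to be read: permutations within $\sim$-blocks are absorbed by $\Inn R$, and handling that is the entire purpose of the normalization machinery ($\omega$, $\Aut_0 R$, $\Aut_0 S$). For normal $\phi$ your construction needs no inner correction at all: since $(\alpha,\xi)$ is normal, evaluating the second stabilizer equation at $(e_i,e_i)$ gives $\eta(e_i)=1$, hence $(e_i)\gamma=e_i^{\phi}\in E$ with the labels preserved, so $\gamma$ is itself normal and $\phi_\gamma=\phi$ on the nose. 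The same restriction must be imposed on your splitting: $\phi\mapsto(\Inn R)\gamma_\phi$ is a section of $\Phi$ only on normal $\phi$, since for non-normal $\phi$ the composite $\Phi((\Inn R)\gamma_\phi)$ returns the normalization of $\phi$ rather than $\phi$ itself.
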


\begin{proof}
The details of this proof are now identical to those of Theorem 3.9 of \cite{Montgomery1}.
\end{proof}

Applications of Theorem \ref{SES} to artinian rings can be found in \cite{And-D'Amb} and\cite{Montgomery1}.  In particular, the semigroup $S$ of Example 3.12 in \cite{Montgomery1} can easily be adapted to that of a (not necessarily finite) regular semigroup for a ring with local units.  From this example, and Corollary \ref{algebra}, we have a class of square-free rings satisfying Theorem \ref{SES} not described in \cite{And-D'Amb2}.


\begin{thebibliography}{8}

\bibitem{And-D'Amb}
{\sc F.W.~Anderson and B.K.~D'Ambrosia}, Square-free algebras and their automorphism groups, \textit{Comm. Alg.}, \textbf{24 (10)} (1996), 3163-3191.
\\
\bibitem{And-D'Amb2}
{\sc F.W.~Anderson and B.K.~D'Ambrosia}, Automorphisms of infinite-dimensional square-free algebras, \textit{Comm. Alg.}, \textbf{27 (4)} (1999), 1695-1710.
\\
\bibitem{And-D'Amb3}
{\sc F.W.~Anderson and B.K.~D'Ambrosia}, Rings with solvable quivers, \textit{Pac. J. Math.}, \textbf{213 (2)} (2004), 213-230.
\\
\bibitem{frank} 
{\sc F.W. Anderson and K.R. Fuller}, \textit{Rings and Categories of Modules} (2d ed.), Springer-Verlag, New York-Berlin-Heidelberg, 1992.
\\
\bibitem{D'Ambrosia}
{\sc B.K.~D'Ambrosia}, Square-free rings,  \textit{Comm. Alg.}, \textbf{27 (5)} (1999), 2045-2071.
\\
\bibitem{Dedecker}
{\sc P. Dedecker}, Three dimensional non-abelian cohomology for groups,  \textit{1969 Category Theory, Homology Theory and their Applications, II (Battelle Institute Conferene, Seattle, Wash., Vol. II)} 32-64 Springer, Berlin 1968.
\\
\bibitem{Montgomery1}
{\sc M.W. Montgomery}, Square-free rings and their automorphism groups,  \textit{Comm. Alg.}, \textbf{38 (10)} (2010), 3767-3789.
\\
\bibitem{Stanley} 
{\sc R.P. Stanley}, Structure of incidence algebras and their automorphism groups, \emph{Bull. Amer. Math. Soc.},  \textbf{76} (1970), 1236-1239.
\end{thebibliography}
\end{document}